\documentclass[11pt]{article}
\usepackage[a4paper, total={17cm, 25cm}]{geometry}

\usepackage[colorinlistoftodos, textsize=tiny]{todonotes}

\usepackage[utf8]{inputenc}

\usepackage{amsfonts}
\usepackage{amsmath}
\usepackage{amsthm}
\usepackage{amssymb}
\usepackage{mathtools}
\usepackage{xcolor}

\usepackage[normalem]{ulem} 

\usepackage{hyperref}

\usepackage{biblatex}
\newtheorem{theorem}{\bf Theorem}[section]
\newtheorem{lemma}[theorem]{\bf Lemma}
\newtheorem{prop}[theorem]{\bf Proposition}

\theoremstyle{definition}
\newtheorem{nota}[theorem]{\bf Notation}

\newtheorem{remark}[theorem]{\bf Remark}
\newtheorem{defi}[theorem]{\bf Definition}
\newtheorem{cond}[theorem]{\bf Condition}

\newcommand{\bbE}{\mathbb E}
\newcommand{\bbR}{\mathbb R}
\newcommand{\bbP}{\mathbb P}
\newcommand{\eps}{\varepsilon}
\newcommand{\1}{\mathbf 1}

\renewcommand{\vec}[1]{\boldsymbol{#1}}

\DeclarePairedDelimiter\norm{\lVert}{\rVert}
\DeclarePairedDelimiter\abs{\lvert}{\rvert}

\usepackage{authblk}

\title{Convergence Rate Analysis of Ratio Consensus Algorithms with Column-Allowable Matrices\thanks{The research was supported by NRDI (National Research, Development and Innovation Office) grant KKP 137490.}}

\author[1,2]{Balázs Gerencsér}
\author[1,2]{Levente Szemerédi}

\affil[1]{HUN-REN Alfréd Rényi Institute of Mathematics, Budapest, Hungary}
\affil[2]{Eötvös Loránd University, Budapest, Hungary}

\date{}

\begin{document}

\maketitle

\begin{abstract}
We give almost sure convergence rate bounds of ratio consensus algorithms when the protocol can be reformulated to be linear updates of vector values on a possibly larger, augmented network. This is an improvement of the results of Gerencsér and Gerencsér from 2021 by allowing zero values on auxiliary nodes infinitely often which makes the technique applicable to a much larger family of algorithms.
\end{abstract}

\section{Introduction}\label{sec:intro}
In distributed computing one fundamental subroutine is distributed averaging of initial values along a network using local communication \cite{nedic2018distributed}. It can be a building block for example to calculate linear synopses, random sampling and quantile computation \cite{kempe2003gossip}. Real world networks can have various imperfections which were addressed in previous works step-by-step. Usually some structure is assumed on the communication network in terms of matrices describing the linear update of the vector of values. For handling asynchronous, directed communication - when doubly stochasticity cannot be assumed for the underlying matrix process - the \emph{push-sum} protocol was introduced by Kempe, Dobra and Gehrke in \cite{kempe2003gossip} and extended by Bénézit et al. \cite{benezit2010weighted} as \emph{weighted gossip}. Later it was referred as \emph{ratio consensus.} Most theory regarding ratio consensus discusses convergence without exact rates.

Gerencsér and Gerencsér in \cite{gerencser2021tight} proved that the rate of convergence was connected to the difference of the top two Lyapunov exponents of the process of transition matrices. However, their theorems need the transition matrices to be \emph{allowable} (at every step there are both in- and outgoing communication) and the process of transition matrices being \emph{sequentially primitive:} after an almost surely finite stopping time each node contains a portion of the original value of every other.

To handle other challenges with the communication, variants of the push-sum protocol were introduced. Hadjicostis et al. in \cite{hadjicostis2015robust} considered the cases when there can be packet losses which are not directly detected by neither the sender nor the receiver, and the one when the knowledge of outgoing edges is imprecise. As a solution, they provided algorithms using \emph{running sums} which made the communication protocol nonlinear (in terms of the updates of the value vectors). Using an augmented network they constructed a Markov Chain formulation, which was linear on the augmented network. However, using \emph{virtual buffers}, the condition of allowability of the transition matrices and sequential primitivity of the matrix process do not hold anymore, thus the results on the exact rate of convergence cannot be used.

Motivated by the example above, in this work we extend the results of Gerencsér and Gerencsér \cite{gerencser2021tight} to be able to handle the push-sum protocol when the allowability and sequential primitivity condition is weakened. Thus, we can connect the convergence rate of the running-sums push-sum protocol to the difference of the first two Lyapunov exponents of the linear process on the augmented network. 

The paper is organized as follows. First, we discuss the fundamental definitions, previous results and state our results in the rest of Section \ref{sec:intro}. In Section \ref{sec:lyap} we include a short summary on Lyapunov exponents. In Section \ref{sec:bounds_on_rate} we provide the proofs for our results concerning the bounds on the convergence rates. In Section \ref{sec:nontriv_gap} we prove that the bounds are nonvanishing using the Birkhoff contraction coefficient. In Section \ref{sec:num_res} we present some numerical results and we conclude in Section \ref{sec:concl}.

\subsection{Push-sum protocol}

The push-sum protocol can be used to calculate approximate averages over a network. Here we describe the formulation called weighted gossip by Bénézit et al. (\cite{benezit2010weighted}).

Let us have a directed graph (network) $G(V,E)$ on vertex (node) set $V$ of size $p=\lvert V\rvert$ and edge (connection) set $E.$ For a vertex $v\in V,$ let us denote the set of its out-neighbors $N_v=\{u\in V:(v,u)\in E\}.$ Initially, we have a real value vector $\vec x_0\in\mathbb R^p$ and a nonnegative and nonzero weight vector $\vec w_0\in\mathbb R^p_+.$ Our aim is to approximate the ratio $(\mathbf 1^\top\vec x_0)/(\mathbf 1^\top\vec w_0),$ where $\mathbf 1=(1,1,\ldots,1)^\top$ of length $p.$ If we choose $\vec w_0=\mathbf 1,$ the ratio corresponds to the averages of the coordinates of $\vec x_0,$ but if $\vec w_0=(1,0,0,\ldots,0)^\top,$ then we get the sum of the values as the ratio.

At each iteration step $n,$ we apply the nonnegative, possibly random, transition matrix $A_n\in\mathbb R^{p\times p}_+$ to have the next value and weight vectors: $$\vec x_n=A_n \vec x_{n-1}, \quad \vec w_n=A_n \vec w_{n-1}.$$

We say that \emph{ratio consensus holds} if for each node $v\in V$ the ratio $x_n^v/w_n^v$ converges to $(\mathbf 1^\top\vec x_0)/(\mathbf 1^\top\vec w_0)$ almost surely, where $x_n^v$ and $w_n^v$ are the value and weight on node $v$ after step $n,$ respectively. Bénézit et al. proved in \cite{benezit2010weighted}, that ratio consensus holds when $(A_n)_{n=1}^\infty$ is a stationary and ergodic sequence of stochastic matrices with positive diagonals, and the expected value $\mathbb EA_1$ is irreducible. These conditions are not very strong, this formulation can be used for push-sum variants using simple messages, see also in \cite{kempe2003gossip}.

Gerencsér and Gerencsér in \cite{gerencser2021tight} proved, in a slightly different format, the following theorem which connects the rate of convergence to the difference of the first two Lyapunov exponents. We will provide in Section \ref{sec:lyap} an overview of the necessary theory regarding the Lyapunov exponents of stochastic matrix processes, and in Subsection \ref{subsec:wdefs} the further technical terms used in the theorem below.

\begin{theorem}[{\cite[Theorems 12, 14, 16]{gerencser2021tight}}]\label{thm:gerencser_orig}
 Let $(A_n)_{n=1}^\infty$ be a strictly stationary, ergodic process of $p\times p$ matrices such that $\mathbb E\log\lVert A_1\rVert^+<\infty.$ Denote $M_n=A_nA_{n-1}\cdots A_1$ the products. Moreover assume that $A_n$ is nonnegative and allowable for all $n,$ the process $(A_n)_{n=1}^\infty$ is sequentially primitive, the first two Lyapunov-exponents are different: $\lambda_1>\lambda_2,$ and for any pair of row indices $i,j,$ and any column index $k,$ $M_n^{ik}/M_n^{jk}$ is subexponential. Then, ratio consensus holds and an explicit upper bound can be given for the rate of convergence as follows: for all $i=1,\ldots,p,$ $\vec x\in\mathbb R^p$ and $\vec w\in\mathbb R^p_+$ nonzero we have $$\limsup_{n\to\infty}\frac1n\log\left\lvert\frac{\vec e_i^\top M_n\vec x}{\vec e_i^\top M_n\vec w}-\frac{\vec v^1\vec x}{\vec v^1\vec w}\right\rvert\leq-(\lambda_1-\lambda_2)\quad\text{a.s.}$$ where $\vec e_i$ is the $i$th unit vector and $v^1$ is from the first order approximation of $M_n.$ Moreover, for all pairs of nonnegative vectors $(x,w)\in\mathbb R^p\times \mathbb R^p$ apart from a set of Lebesgue measure zero, it holds that $$\limsup_{n\to\infty}\frac1n\log\max_i\left\lvert\frac{\vec e_i^\top M_n\vec x}{\vec e_i^\top M_n\vec w}-\frac{\vec v^1\vec x}{\vec v^1\vec w}\right\rvert=-(\lambda_1-\lambda_2)\quad\text{a.s.}$$
\end{theorem}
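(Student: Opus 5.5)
The plan is to derive both statements from the multiplicative ergodic theorem (Oseledets) applied to the products $M_n=A_nA_{n-1}\cdots A_1$. We use the first order approximation $M_n=\vec u_n\vec v^1+R_n$, where $\vec v^1$ is a (random) row vector independent of $n$. It spans the top Oseledets direction of the transposed (backward) products; equivalently, its kernel is the Oseledets subspace of $M_n$ with exponent at most $\lambda_2$. We have $\limsup\frac1n\log\lVert\vec u_n\rVert=\lambda_1$ and $\limsup\frac1n\log\lVert R_n\rVert\le\lambda_2$. Nonnegativity, allowability and sequential primitivity are used to show that $\vec v^1$ can be chosen strictly positive. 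Hence $\vec v^1\vec w>0$ for every nonzero $\vec w\in\mathbb R^p_+$, so the limit ratio is well defined.

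\textbf{Upper bound.} Write
\[
\frac{\vec e_i^\top M_n\vec x}{\vec e_i^\top M_n\vec w}-\frac{\vec v^1\vec x}{\vec v^1\vec w}
=\frac{\vec e_i^\top M_n\bigl(\vec x(\vec v^1\vec w)-\vec w(\vec v^1\vec x)\bigr)}{(\vec e_i^\top M_n\vec w)(\vec v^1\vec w)} .
\]
The vector $\vec z=\vec x(\vec v^1\vec w)-\vec w(\vec v^1\vec x)$ satisfies $\vec v^1\vec z=0$. Therefore $M_n\vec z=R_n\vec z$, and the numerator grows at exponential rate at most $\lambda_2$. For the denominator we need $\liminf\frac1n\log(\vec e_i^\top M_n\vec w)\ge\lambda_1$ for each row $i$. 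Since $\vec w$ is nonnegative and nonzero, $\vec e_i^\top M_n\vec w\ge M_n^{ik}w^k$ for some $k$ with $w^k>0$. By the subexponential row-ratio assumption, $M_n^{ik}$ is within a subexponential factor of $\max_j M_n^{jk}$, which is comparable to $\lVert M_n\vec e_k\rVert$. Because $\vec v^1\vec e_k>0$, this norm grows at rate exactly $\lambda_1$. Combining the two estimates gives the $\limsup\le-(\lambda_1-\lambda_2)$ bound.

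\textbf{Lower bound.} For the equality we must show that for Lebesgue-a.e.\ nonnegative $(\vec x,\vec w)$ the numerator $\max_i\lvert\vec e_i^\top M_n\vec z\rvert$ actually grows at rate $\lambda_2$. We also need the matching upper bound $\frac1n\log\vec e_i^\top M_n\vec w\to\lambda_1$ on the denominator, which follows from $\lVert M_n\rVert$ growing at rate $\lambda_1$. For the numerator, consider the Oseledets filtration: $\lVert M_n\vec z\rVert$ grows at rate $\lambda_2$ unless $\vec z$ lies in the lower subspace $V_3$, of codimension at least $2$. The map $(\vec x,\vec w)\mapsto\vec z$ is a quadratic map whose image, for fixed $\vec w$, is a hyperplane $\ker\vec v^1$ restricted to a neighbourhood. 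The set of pairs with $\vec z\in V_3$ is a proper algebraic subset, given fixed random subspaces, and so has Lebesgue measure zero for each $\omega$. Fubini then gives the a.s.\ statement for a.e.\ pair.

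\textbf{Main obstacle.} The hardest step is establishing strict positivity of $\vec v^1$ and the uniform row comparability, which are needed for the denominator lower bound. For this I would use sequential primitivity: after the a.s.\ finite stopping time the product is strictly positive. Then a Birkhoff-contraction argument on backward products shows that the normalized rows of $M_n$ converge projectively to a positive vector.
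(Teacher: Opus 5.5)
Your argument is correct, but it departs from the paper's route (inherited from Gerencsér--Gerencsér) at the positivity of $\vec v^1$ and at sharpness.

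\textbf{Upper bound.} The paper divides the first-order expansions of $\vec e_i^\top M_n\vec x$ and $\vec e_i^\top M_n\vec w$ by $\vec u_n^{i1}$ and $\vec v^1\vec w$. It then uses that $1/\vec u_n^{i1}$ is subexponential (Lemma~\ref{lemma:u_subexp}) and that $\vec v^1>0$ (Lemma~\ref{lemma:v_pos}). Your version writes the difference via $\vec z=(\vec v^1\vec w)\vec x-(\vec v^1\vec x)\vec w\in\ker\vec v^1$, uses $M_n\vec z=R_n\vec z$, and bounds $\vec e_i^\top M_n\vec w$ from below column by column. This is the same content in a cleaner form; your column comparison is essentially Lemma~\ref{lemma:u_subexp} in direct form.

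\textbf{Positivity of $\vec v^1$.} The paper proves Lemma~\ref{lemma:v_pos} by applying Lemma~\ref{lemma:u_subexp} to the reversed process, which again leans on a subexponential hypothesis. You get positivity from cone geometry alone, with no use of that hypothesis. You do not even need contraction here. Since $\vec v^1=\lim\vec v_n^1$ already exists, it suffices that for $n$ beyond the primitivity time $\tau$ we have $\vec v_n^1=(\sigma_n^1)^{-2}M_n^\top M_n\vec v_n^1$ with $\vec v_n^1\ge 0$. This places $\vec v_n^1$ in the closed cone $M_\tau^\top(\bbR^p_+)$, which stays away from the boundary of the orthant. Keep in mind that such cone or Birkhoff arguments only control ratios \emph{within} a row of $M_n$. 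Comparability \emph{across} rows is exactly the subexponential hypothesis. You use it correctly in the upper bound, but your ``main obstacle'' remark blurs the two.

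\textbf{Sharpness.} The paper derives sharpness from the exact total-variation rate of Theorem~\ref{thm:general_total_var}, whose lower bound rests on the growth rate $\lambda_1+\lambda_2$ of $M_n\vec x\wedge M_n\vec w$ (Lemma~\ref{lemma:exterior}). You instead apply the Oseledets filtration directly to $\vec z\in\ker\vec v^1=L_{\lambda_2}$. For fixed $\omega$ and $\vec w$, the map $\vec x\mapsto\vec z$ is a linear surjection onto $\ker\vec v^1$ with kernel spanned by $\vec w$. So the bad $\vec x$ form a subspace of dimension $\dim L_{\mu_3}+1\le p-1$, and Fubini finishes.

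\textbf{Comparison.} Your route avoids exterior powers and makes the exceptional set transparent. The paper's route stays in the exterior-product framework, which it reuses in Lemma~\ref{lemma:exterior_exceptional} to control exceptional sets inside coordinate subspaces. That is the case relevant when initial vectors vanish on virtual nodes.
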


Hadjicostis et al. in \cite{hadjicostis2015robust} considered variants, where the messages are more complicated: instead of sending the values directly, so called running sums are communicated. For the case of possible message losses at each step the sender sends the sum of the values which would have been sent through the whole communication process according to the push-sum protocol. Using this convention, occasional message losses are not posing an issue: the information is just delayed until a successful delivery occurs. However, this protocol cannot be written using linear transformations directly on the network. But by using an augmented network, where some additional nodes representing buffer values are added and edges (connections) are modified accordingly, each step becomes a linear transformation of the values. However, the conditions of Theorem \ref{thm:gerencser_orig} are not satisfied for this matrix process.

\subsection{Weakened definitions}\label{subsec:wdefs}

We follow follow the notations of \cite{gerencser2021tight}. In particular, $(A_n)_{n=1}^\infty$ denotes the stochastic process, $M_n=A_nA_{n-1}\cdots A_1$ the product of the first $n$ terms -- i.e. the state after the $n^{th}$ step, $\lambda_i$ the $i^{th}$ Lyapunov exponent of the process $(A_n)_{n=1}^\infty$, $\bbR_+^p$ is the set of nonnegative and nonzero $p$-dimensional vectors.

The most problematic conditions from \cite{gerencser2021tight}, which are clearly not true for the matrix processes defined on the augmented network for the running sums version, are requiring the transition matrices to be allowable and the stochastic matrix process to be primitive. Instead, we show that when we use matrices not having zero columns, called column-allowable matrices, with the following weakening of primitivity, convergence rate results still can be deduced.

\begin{defi} \label{def:allowable}
A nonnegative matrix $A$ is called \emph{column-allowable} if each of its columns contains a strictly positive value. Similarly, it is called \emph{row-allowable} if each of its rows contains a strictly positive value. Moreover, $A$ is called \emph{allowable} if it is both column-allowable and row-allowable.
\end{defi}

\begin{defi} A strictly stationary process of nonnegative column-allowable matrices $(A_n)_{n=1}^\infty$ is \emph{weakly (forward) sequentially primitive} if there exists an almost surely finite stopping time $\tau,$ such that each row of the matrix $M_\tau=A_\tau A_{\tau-1}\cdots A_1$ is either strictly positive or zero.
\end{defi}

\begin{defi}\label{def:nodes} Let $(A_n)_{n=1}^\infty$ be a (random) sequence of nonnegative column-allowable matrices. We associate the rows and column indices with the nodes of an augmented network as follows. We call the subset of nodes $$\left\{i\in\{1,\dots,p\}\mid\bbP\left(\limsup_{n\to\infty}\left(\left\{\vec e_i^\top M_n=\vec 0\right\}\right)\right)=0\right\}$$ the set of \emph{real nodes}. The remaining nodes are called \emph{virtual nodes}.

We say that there is an \emph{edge from real node $i$ to real node $j$} if $\bbP\left(\limsup_{n\to\infty}\left(\left\{{\vec e}_j^\top A_n{\vec e}_i>0\right\}\right)\right)=1.$
\end{defi}

To state our results and formulate the proofs, we need the following notion of weak subexponentiality.

\begin{defi}
 A stochastic process $(\xi_n)_{n=1}^\infty$ is called \emph{weakly subexponential} if for any $\eps>0$ we have for all $n$, with finitely many exceptions, a.s. $|\xi_{n}|\leq e^{\eps n}$ or $\xi_n=\infty$. Equivalently, $\limsup_{\substack{n\to\infty,\\\xi_n\neq\infty}}\frac1{n}\log\abs{\xi_{n}}\leq 0.$ When we do not allow the process to take infinity as value but $\limsup_{n\to\infty}\frac1{n}\log\abs{\xi_{n}}\leq 0$ still holds, we call it \emph{subexponential}.
\end{defi}

\begin{cond}
 \label{cond:oseledec}
 Assume that $(A_n)_{n=1}^\infty$ is a strictly stationary ergodic process of $p\times p$ matrices such that $\bbE\log\norm{A_1}^+<\infty.$
\end{cond}

\begin{cond}[{Generalization of \cite[Condition 26]{gerencser2021tight}}]
 \label{cond:subexp_elements} Let $(A_n)_{n=1}^\infty$ be a sequence of (possibly deterministic) $p\times p$ matrices and $M_n=A_nA_{n-1}\cdots A_1$. Then, let us assume that for any pair of row indices $i,j$, and any column index $k$, it holds that $M_n^{ik}/M_n^{jk}$ is weakly subexponential.
\end{cond}

Note that if $(A_n)_{n=1}^\infty$ is a weakly sequentially primitive sequence, after time $\tau$ from the definition of weak sequential primitivity is reached, the indices $n$ for which the quantity is infinity only depend on the row indices $i$ and $j,$ and independent of the column index $k.$

\subsection{Main theorems}

We now arrive to state our main theorems. First, the one that quantifies at what rate the value vector and the weight vector become asymptotically collinear.

\begin{theorem}[{Generalization of \cite[Theorem 8]{gerencser2021tight}}]
\label{thm:general_total_var}
Let $(A_n)_{n=1}^\infty$ be a sequence of nonnegative matrices which is weakly sequentially primitive. Moreover, assume that Condition \ref{cond:oseledec} holds.
Then, for all pairs $(\vec x,\vec w)\in\bbR^p_+\times\bbR^p_+$, except for a set of Lebesgue measure zero, it almost surely holds that $$\lim_{n\to\infty}\frac1n\log\norm*{\frac{\vec x_n}{\1^\top \vec x_n}-\frac{\vec w_n}{\1^\top \vec w_n}}_{TV}=-(\lambda_1-\lambda_2)$$ where $\lambda_1$ and $\lambda_2$ are the first and the second largest Lyapunov exponents associated with $(A_n)$ and $\vec x_n=M_n\vec x,$ $\vec w_n=M_n\vec w$ for $M_n=A_nA_{n-1}\cdots A_1.$
In addition, for any fixed pair $(\vec x,\vec w)\in\bbR^p_+\times\bbR^p_+$ with strictly positive components with no exception, it holds that the above limit exists almost surely, and $$\lim_{n\to\infty}\frac1n\log\norm*{\frac{\vec x_n}{\1^\top \vec x_n}-\frac{\vec w_n}{\1^\top \vec w_n}}_{TV}\leq-(\lambda_1-\lambda_2).$$
\end{theorem}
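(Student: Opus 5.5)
The plan is to follow the structure of the proof of \cite[Theorem 8]{gerencser2021tight}, replacing allowability and sequential primitivity by column-allowability and weak sequential primitivity. The starting point is the identity
\[
\frac{\vec x_n}{\1^\top \vec x_n}-\frac{\vec w_n}{\1^\top \vec w_n}=\frac{(\1^\top\vec w_n)\vec x_n-(\1^\top\vec x_n)\vec w_n}{(\1^\top\vec x_n)(\1^\top\vec w_n)} .
\]
The numerator is a linear image of the antisymmetric tensor $\vec x_n\wedge\vec w_n=\Lambda^2M_n(\vec x\wedge\vec w)$. By the Oseledec theory of Section~\ref{sec:lyap} applied to the exterior square under Condition~\ref{cond:oseledec}, $\frac1n\log\norm{\Lambda^2M_n(\vec x\wedge\vec w)}\to\lambda_1+\lambda_2$ for Lebesgue-a.e.\ $(\vec x,\vec w)$, and $\limsup\le\lambda_1+\lambda_2$ for every pair. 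Conversely, the numerator bounds $\vec x_n\wedge\vec w_n$ up to constants once we project away the $\1$-direction, so this step costs only a bounded factor. For the denominators I want $\frac1n\log\1^\top M_n\vec x\to\lambda_1$ for every $\vec x$ with strictly positive entries. Since $M_n$ is nonnegative, $\1^\top M_n\vec x\ge(\min_k x^k)\,\1^\top M_n\1\ge c\norm{M_n}$, and trivially $\1^\top M_n\vec x\le C\norm{M_n}$. This gives exactly $\lambda_1$ and yields the upper bound $-(\lambda_1-\lambda_2)$ for strictly positive pairs. There the limit is in fact a $\limsup$, and I would also show existence by the same two-sided argument when the wedge exponent exists.

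For general nonnegative nonzero $\vec x$ (the Lebesgue-a.e.\ statement), positivity of coordinates must be produced by the dynamics, and weak sequential primitivity enters here. After the stopping time $\tau$, each row of $M_\tau$ is either strictly positive or zero. Hence $M_\tau\vec x=\vec y$, where $\vec y$ is positive exactly on the nonzero rows $R$, for any nonzero nonnegative $\vec x$. Column-allowability of every $A_n$ guarantees $\vec 1^\top A_n\vec y>0$ whenever $\vec y\ne0$, so mass never vanishes and $\1^\top M_n\vec x>0$ for all $n$. The goal is then $\1^\top M_n\vec x\asymp_{\text{subexp}}\norm{M_n}$ for $n\ge\tau$. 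The lower bound should come from $M_n\vec x=(A_n\cdots A_{\tau+1})M_\tau\vec x\ge c\,(A_n\cdots A_{\tau+1})\1_R$. Since the zero rows of $M_\tau$ satisfy $\1_R^\top$-type bounds, $M_n=(A_n\cdots A_{\tau+1})M_\tau$ only sees columns indexed by $R$, and $M_\tau\le C\1_R\1^\top$. Therefore $\norm{M_n}\le C\,\1^\top(A_n\cdots A_{\tau+1})\1_R\le C'\1^\top M_n\vec x$, with $\tau$-dependent but $n$-independent constants. This makes both denominators grow like $e^{n\lambda_1}$ for every nonzero nonnegative pair, and it is the key replacement for allowability.

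With the denominators controlled, the lower bound for a.e.\ pair reduces to the exterior-power statement. The numerator $\norm{\vec x_n\wedge\vec w_n}$ grows at rate exactly $\lambda_1+\lambda_2$ off the measure-zero set where $\vec x\wedge\vec w$ lies in the lower Oseledec subspace of $\Lambda^2$; that set is a proper algebraic-type subset of the positive cone pairs, hence Lebesgue-null. One must check that TV-norm of the displayed difference is comparable to $\norm{\vec x_n\wedge\vec w_n}/((\1^\top\vec x_n)(\1^\top\vec w_n))$. This holds because $\vec u\mapsto(\1^\top\vec w)\vec u-(\1^\top\vec u)\vec w$ recovers $\vec u\wedge\vec w$ up to a factor $\norm{\vec w}/\1^\top\vec w$, which is bounded for nonnegative $\vec w$. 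I expect the main obstacle to be the denominator comparison in the second paragraph: making rigorous that $M_\tau$'s zero rows do not spoil the estimate $\norm{M_n}\lesssim\1^\top M_n\vec x$ uniformly for $n\ge\tau$. I would handle this with the factorization through $\1_R$ and stationarity only being needed for the Oseledec limits, not for this deterministic comparison.
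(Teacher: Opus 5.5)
Your argument is correct. It has the same skeleton as the paper's proof, which reruns \cite[Theorem 8]{gerencser2021tight}: the numerator $(\1^\top\vec w_n)\vec x_n-(\1^\top\vec x_n)\vec w_n$ is controlled by $\vec x_n\wedge\vec w_n$ through Lemma~\ref{lemma:exterior}, and both normalisations grow at rate $\lambda_1$.

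Where you differ is in proving that last fact, which the paper takes from Lemma~\ref{lemma:first_dominates}. There one writes $\abs{M_n\vec x}^2=\sum_i(\sigma_n^i)^2(\vec v_n^i\vec x)^2$ and applies Perron--Frobenius to $M_n^\top M_n$. That matrix is strictly positive for $n\ge\tau$, which is where weak sequential primitivity enters, and this gives $\vec v_n^1>0$. One then bounds $\vec v_n^1\vec x$ from below through its limit $\vec v^1\vec x$. This route has two costs:
\begin{itemize}
\item It leans on the limit $\vec v^1$ from Lemma~\ref{lemma:first_order_approx}, which is only available when $\lambda_1>\lambda_2$. That is not a hypothesis of this theorem.
\item For nonnegative but not strictly positive $\vec x$ it needs $\vec v^1>0$, i.e.\ the extra assumptions of Lemma~\ref{lemma:v_pos}.
\end{itemize}
Your entrywise bound $\1^\top M_n\vec x\ge(\min_k x^k)\,\1^\top M_n\1\ge c\norm{M_n}$ uses only nonnegativity. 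Your factorisation $M_n=(A_n\cdots A_{\tau+1})M_\tau$, with $M_\tau\vec x\ge c\1_R$ and $M_\tau\le C\1_R\1^\top$, extends it to every nonnegative nonzero $\vec x$ using weak sequential primitivity alone. Your comparison $\abs{\vec u\wedge\vec w}\le\frac{\abs{\vec w}}{\1^\top\vec w}\abs{(\1^\top\vec w)\vec u-(\1^\top\vec u)\vec w}$ is also right. What the paper's SVD route buys instead is the $\vec v_n^1,\vec v^1$ machinery it reuses in Lemma~\ref{lemma:ui1n_eimn} and Theorem~\ref{thm:spectral_bound}.

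One correction to your framing. The Lebesgue-a.e.\ claim does not involve the boundary of the orthant, because pairs with a vanishing coordinate form a null set in $\bbR^p\times\bbR^p$. So both claims of the theorem only concern strictly positive pairs, and your second paragraph is not needed for them. With your first-paragraph bound, the row condition in weak sequential primitivity is then not used at all. Only column-allowability is used, to keep the normalisations nonzero.

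Likewise, existence of the limit for every positive pair is automatic. Oseledec's theorem applied to $A_n\wedge A_n$ gives $\lim_n\frac1n\log\abs{(M_n\wedge M_n)\xi}$ for every $\xi$ on a full-measure event, so there is no need for "when the wedge exponent exists".

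Your extension to nonnegative nonzero vectors is still worth keeping. It is exactly what one needs for initial vectors vanishing on virtual nodes, the situation behind Lemma~\ref{lemma:exterior_exceptional}.
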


The following two theorems give a bound on the rate of ratio consensus: the first in a more general case, the second in the more simple one for averaging.

\begin{theorem}[{Generalization of the end of \cite[Theorem 12,14,16]{gerencser2021tight}}]
 \label{thm:spectral_bound}
 Assume that the conditions of Theorem \ref{thm:general_total_var} are satisfied, in addition $\lambda_1-\lambda_2>0$, and Condition \ref{cond:subexp_elements} holds.
Take an arbitrary vector of initial values $\vec x\in\bbR^p$ and a nonnegative vector of initial weights $\vec w\in\bbR^p_+$ such that $\vec w\neq \vec 0.$ Then, ratio consensus takes place and an explicit upper bound for the rate of convergence can be given as follows: for all $i=1,\dots,p$, we almost surely have $$\limsup_{\substack{n\to\infty\\\vec e_i^\top M_n\neq\vec 0}}\frac1n\log\abs*{\frac{\vec e_i^\top M_n\vec x}{\vec e_i^\top M_n\vec w}-\frac{\vec v^1\vec x}{\vec v^1\vec w}}\leq-(\lambda_1-\lambda_2)$$ for some positive random vector $\vec v^1\in\bbR^p_+.$
\end{theorem}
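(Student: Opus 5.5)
The plan is to follow the structure of the proof of \cite[Theorem 12]{gerencser2021tight} and to replace the use of allowability and sequential primitivity by weak sequential primitivity and Condition \ref{cond:subexp_elements}. Every quantity is evaluated only along times $n$ with $\vec e_i^\top M_n\neq\vec 0$.

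\textbf{Step 1: first order approximation.} By Condition \ref{cond:oseledec} and $\lambda_1>\lambda_2$, Oseledec's theorem (Section \ref{sec:lyap}) gives a random row vector $\vec v^1$ such that the normalized rows of $M_n$ align with $\vec v^1$ up to an error of exponential rate $-(\lambda_1-\lambda_2)$. Concretely, write $M_n=\vec u_n\vec v^1+R_n$, where $\vec u_n$ is the component of the row space of $M_n$ along the top Oseledec direction. Then
\[\limsup_n\frac1n\log\norm{R_n}\le\lambda_2,\qquad \lim_n\frac1n\log\norm{\vec u_n}=\lambda_1 .\]
Positivity of $\vec v^1$ comes from weak sequential primitivity. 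After $\tau$, every nonzero row of $M_n=(A_n\cdots A_{\tau+1})M_\tau$ is a nonnegative combination of strictly positive rows of $M_\tau$. By column-allowability, the rows of $M_\tau$ that are positive are precisely those reached with positive mass, so every nonzero row of $M_n$ is strictly positive. Hence limits of normalized nonzero rows are nonnegative. They are in fact comparable to the positive rows of $M_\tau$, so $\vec v^1$ is strictly positive, and in particular $\vec v^1\vec w>0$ for $\vec w\in\bbR^p_+$.

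\textbf{Step 2: the ratio identity.} Fix $i$ and times $n\ge\tau$ with $\vec e_i^\top M_n\neq\vec 0$. Write $\vec e_i^\top M_n=c_n\vec v^1+\vec r_n$ with $c_n=u_n^i$ and $\vec r_n=\vec e_i^\top R_n$. Then
\[\frac{\vec e_i^\top M_n\vec x}{\vec e_i^\top M_n\vec w}-\frac{\vec v^1\vec x}{\vec v^1\vec w}=\frac{\vec r_n\vec x\,(\vec v^1\vec w)-\vec r_n\vec w\,(\vec v^1\vec x)}{(\vec v^1\vec w)\,\vec e_i^\top M_n\vec w}.\]
The numerator is at most $C\norm{\vec r_n}$. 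It remains to show $\frac1n\log(\vec e_i^\top M_n\vec w)\ge\lambda_1-o(1)$ along the admissible times, and $\frac1n\log\norm{\vec r_n}\le\lambda_2+o(1)$. The latter follows from the bound on $R_n$ in Step 1.

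\textbf{Step 3 (main obstacle): lower bound on the denominator.} Since $\vec w$ is nonnegative and nonzero, pick a column $k$ with $w^k>0$. Then $\vec e_i^\top M_n\vec w\ge w^kM_n^{ik}$. Also $\max_j M_n^{jk}\ge c\norm{M_n\vec e_k}$, which grows at rate $\lambda_1$ because $\vec v^1\vec e_k>0$, by Step 1. Condition \ref{cond:subexp_elements} gives
\[M_n^{ik}\ge e^{-\eps n}\max_j M_n^{jk}\]
for all large admissible $n$. The ratio $M_n^{jk}/M_n^{ik}$ is infinite exactly when row $i$ vanishes, and by the remark after Condition \ref{cond:subexp_elements} this does not depend on $k$; those are precisely the excluded times. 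Combining, $\frac1n\log\vec e_i^\top M_n\vec w\ge\lambda_1-2\eps$ eventually along admissible times.

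The difficulty is that the Oseledec decomposition controls $\norm{M_n\vec e_k}$ only globally. Making the passage from column norm to an individual entry uniform in $n$ is exactly what weak subexponentiality supplies. With $\eps\to0$, the bound follows. Ratio consensus then follows from the same bound because $\lambda_1-\lambda_2>0$.
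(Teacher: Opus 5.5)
Your proof is correct, but it takes a more direct route than the paper's.

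\textbf{The paper's route.} The paper expands $\vec e_i^\top M_n\vec x$ and $\vec e_i^\top M_n\vec w$ around $u_n^{i1}\sigma_n^1\vec v^1$ and divides by $u_n^{i1}$. It therefore needs three lemmas:
\begin{itemize}
\item Lemma~\ref{lemma:ui1n_eimn}: zero rows of $M_n$ correspond exactly to zero entries of $\vec u_n^1$.
\item Lemma~\ref{lemma:u_subexp}: $1/u_n^{i1}$ is weakly subexponential, proved by contradiction from Condition~\ref{cond:subexp_elements}.
\item Lemma~\ref{lemma:v_pos}: $\vec v^1>0$, proved via the reversed process.
\end{itemize}

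\textbf{Your route.} You use the exact identity that leaves only $\vec r_n$ in the numerator. You then bound the denominator from below by $w^kM_n^{ik}\ge w^ke^{-\eps n}\max_jM_n^{jk}$, applying Condition~\ref{cond:subexp_elements} directly to matrix entries. This never requires a lower bound on the individual coefficient $u_n^{i1}$, and your Step~3 is in effect a direct proof of what Lemma~\ref{lemma:u_subexp} supplies.

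Your positivity argument for $\vec v^1$ is also different, and it uses only weak sequential primitivity. For $n\ge\tau$, every nonzero row of $M_n$ is a nonnegative combination of the positive rows of $M_\tau$. Hence it lies in the closed cone $\{\vec y\ge 0:\min_k y^k\ge\delta\max_k y^k\}$, where $\delta=\delta(\omega)>0$ is determined by $M_\tau$ and does not depend on $n$.

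\textbf{Points to tighten.}
\begin{itemize}
\item State this uniformity in $n$ explicitly; your phrase ``comparable to the positive rows'' does not make it visible.
\item Say why $\vec v^1$ is a limit of vectors in that cone. For example, $\vec v_n^1=(\vec u_n^1)^\top M_n/\sigma_n^1$ with $\vec u_n^1\ge 0$ by Perron--Frobenius applied to $M_nM_n^\top$.
\item Your Step~1 claim that all normalized rows of $M_n$ align with $\vec v^1$ is too strong. Only rows with non-negligible coefficient do, and that per-row issue is exactly what your Step~3 resolves.
\end{itemize}

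\textbf{What each route buys.} The paper's route produces singular-vector lemmas that are reused elsewhere, e.g.\ in the second part of Lemma~\ref{lemma:first_dominates}. Yours is shorter and self-contained for this theorem. It also shows that Condition~\ref{cond:subexp_elements} is not needed for the positivity of $\vec v^1$.
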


\begin{theorem}[{Generalization of \cite[Theorem 19]{gerencser2021tight}}]
 \label{thm:col_stoch_spectral_bound}
 Assume that the conditions of Theorem \ref{thm:spectral_bound} hold, and $A_n$ is column stochastic for all $n$. Then, for any vector of initial values $\vec x\in\bbR^p$ and any nonnegative vector of initial weights $\vec w\in\bbR_+^p$ such that $\vec w\neq\vec 0$, we have almost surely for all $i=1,\dots,p$ $$\limsup_{\substack{n\to\infty\\\vec e_i^\top M_n\neq \vec 0}}\frac1n\log\abs*{\frac{\vec e_i^\top M_n\vec x}{\vec e_i^\top M_n\vec w}-\frac{\1^\top \vec x}{\1^\top \vec w}}\leq\lambda_2<0.$$
\end{theorem}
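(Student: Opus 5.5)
Theorem \ref{thm:col_stoch_spectral_bound} will follow as a specialization of Theorem \ref{thm:spectral_bound}. Two facts about the column-stochastic case are needed: (i) $\lambda_1=0$, so that $-(\lambda_1-\lambda_2)=\lambda_2$; and (ii) the random vector $\vec v^1$ appearing in Theorem \ref{thm:spectral_bound} can be replaced by $\1^\top$, up to a positive scalar factor.

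\textbf{Step (i): $\lambda_1=0$.} Since every $A_n$ is column stochastic, $\1^\top M_n=\1^\top$ for all $n$. Hence $\norm{M_n}$ in the $\ell_1$-induced norm equals $1$, and
\[\lambda_1=\lim_n \tfrac1n\log\norm{M_n}=0 .\]
Condition \ref{cond:oseledec} is trivially satisfied because $\norm{A_1}_1=1$. The hypothesis $\lambda_1-\lambda_2>0$ then gives $\lambda_2<0$, which is the last inequality claimed in the theorem.

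\textbf{Step (ii): identifying $\vec v^1$.} I would recall how $\vec v^1$ arises in the proof of Theorem \ref{thm:spectral_bound}, via the first-order approximation $M_n\approx \vec u_n\vec v^1$. Here $\vec v^1$ is the limiting direction of the rows of $M_n$, namely the top left (co-)Oseledets direction, characterized by
\[\lim_n\tfrac1n\log\norm{M_n\vec y}<\lambda_1 \quad\text{exactly when } \vec v^1\vec y=0\ \ (\text{for nonnegative or generic } \vec y).\]
In the column-stochastic case, for every nonnegative $\vec y$ we have $\norm{M_n\vec y}_1=\1^\top\vec y$, which does not decay. Moreover, vectors $\vec y$ with $\1^\top\vec y=0$ satisfy
\[\tfrac1n\log\norm{M_n\vec y}\to\lambda_2<0\]
by Theorem \ref{thm:general_total_var}, applied to the positive and negative parts of $\vec y$, which have equal mass. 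So the hyperplane of the slower exponent is $\{\vec y:\1^\top\vec y=0\}$, and therefore $\vec v^1=c\,\1^\top$ for a positive scalar $c$.

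A more direct route avoids the Oseledets characterization. Summing the rank-one approximation over rows gives
\[\1^\top=\1^\top M_n=(\1^\top\vec u_n)\,\vec v^1+\text{error},\]
where the error is exponentially small relative to $\norm{\vec u_n}$. Letting $n\to\infty$ forces $\vec v^1\parallel\1^\top$. The scalar $c$ cancels in the ratio $\frac{\vec v^1\vec x}{\vec v^1\vec w}$, so substituting into Theorem \ref{thm:spectral_bound} yields the bound with $\frac{\1^\top\vec x}{\1^\top\vec w}$ and exponent $\lambda_2$, for every $i$ along the indices where $\vec e_i^\top M_n\neq\vec 0$.

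\textbf{Main obstacle.} The delicate point is step (ii): justifying rigorously that the specific $\vec v^1$ constructed in the proof of Theorem \ref{thm:spectral_bound} coincides with $\1^\top$ up to scaling. Virtual rows may vanish, so the normalization of the approximation must be handled with care. I would try the summation argument first. Its key input is that $\norm{\vec u_n}$ is of order $e^{n\lambda_1}=1$, because the rows of $M_n$ sum to $\1^\top$, while the remainder term decays like $e^{n(\lambda_2+\eps)}$. With these two facts, the identity $\1^\top=(\1^\top\vec u_n)\vec v^1+o(1)$ directly pins down the direction of $\vec v^1$.
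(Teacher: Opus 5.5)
Your proposal is correct and follows the same route as the paper's one-line proof (which defers to \cite[Theorem 19]{gerencser2021tight}). Column stochasticity gives $\1^\top M_n=\1^\top$, hence $\lambda_1=0$, and the summation identity $\1^\top=(\1^\top\vec u_n^1)\sigma_n^1\vec v^1+O(e^{(\lambda_2+o(1))n})$ forces $\vec v^1\parallel\1^\top$, so Theorem~\ref{thm:spectral_bound} specializes directly; vanishing rows play no role in this identity, so your normalization worry is moot. Your first route has a small slip: $\vec y^{+}$ and $\vec y^{-}$ have disjoint supports, so they are neither strictly positive nor generic, and Theorem~\ref{thm:general_total_var} should be applied to $\vec y^{\pm}+c\1$ instead, giving only $\limsup\le\lambda_2$ rather than convergence to $\lambda_2$; the direct route you prefer avoids this and is airtight.
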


Finally, the following theorem states that the upper bound provided by the previous theorems is sharp.

\begin{theorem}[{Generalization of \cite[Theorem 21]{gerencser2021tight}}]
 \label{thm:strict_bound}
 Assume that the conditions of Theorem \ref{thm:spectral_bound} are satisfied. Then, for all pairs of nonnegative vectors $(\vec x,\vec w)\in\bbR_+^p\times\bbR_+^p$, such that $\vec x,\vec w\neq \vec 0$, expect perhaps for a set of Lebesgue measure zero, it almost surely holds that $$\limsup_{n\to\infty}\frac1n\log\max_{i,\text{ if } \vec e_i^\top M_n\neq\vec 0}\abs*{\frac{\vec e_i^\top M_n\vec x}{\vec e_i^\top M_n\vec w}-\frac{\vec v^1\vec x}{\vec v^1\vec w}}=-(\lambda_1-\lambda_2).$$
\end{theorem}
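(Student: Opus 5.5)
The upper bound ``$\leq$'' is already given by Theorem \ref{thm:spectral_bound}, applied for each $i$ separately. Since there are finitely many $i$, the $\limsup$ of the maximum over $i$ with $\vec e_i^\top M_n\neq\vec 0$ is the maximum of the individual $\limsup$s. So the real work is the lower bound: for Lebesgue-a.e.\ nonnegative pair $(\vec x,\vec w)$ we must show
$$\limsup_{n\to\infty}\frac1n\log\max_{i:\,\vec e_i^\top M_n\neq\vec 0}\abs*{\frac{\vec e_i^\top M_n\vec x}{\vec e_i^\top M_n\vec w}-\frac{\vec v^1\vec x}{\vec v^1\vec w}}\ge-(\lambda_1-\lambda_2).$$
I will reduce this to the total-variation statement of Theorem \ref{thm:general_total_var}, whose lim equals $-(\lambda_1-\lambda_2)$ for a.e.\ pair.

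First, write $r_i=\frac{x_n^i}{w_n^i}$ for $i$ in the support $S_n=\{i:\vec e_i^\top M_n\neq\vec 0\}$, and $c=\frac{\vec v^1\vec x}{\vec v^1\vec w}$. Off $S_n$ both $\vec x_n$ and $\vec w_n$ vanish, so the TV distance only involves coordinates in $S_n$. There $w_n^i>0$ after $\tau$, by weak sequential primitivity and $\vec w\neq\vec 0$ nonnegative. Using $\vec x_n=\vec w_n\circ \vec r$, we get
$$\frac{x_n^i}{\1^\top\vec x_n}-\frac{w_n^i}{\1^\top\vec w_n}=\frac{w_n^i}{\1^\top\vec w_n}\cdot\frac{(r_i-c)-(\bar r-c)}{\bar r},\qquad \bar r=\frac{\1^\top\vec x_n}{\1^\top\vec w_n}=\sum_{j\in S_n}\frac{w_n^j}{\1^\top \vec w_n}r_j .$$
Since $\bar r$ is a convex combination of the $r_j$, we have $\abs{\bar r-c}\le\max_{j\in S_n}\abs{r_j-c}$. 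Also $w_n^i/\1^\top\vec w_n\le 1$. Hence
$$\norm*{\frac{\vec x_n}{\1^\top\vec x_n}-\frac{\vec w_n}{\1^\top\vec w_n}}_{TV}\le \frac{2p}{\bar r}\max_{j\in S_n}\abs{r_j-c}.$$

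Second, I need $\bar r$ to stay bounded away from $0$ subexponentially, i.e.\ $\liminf\frac1n\log\bar r\ge 0$. For nonnegative nonzero $\vec x$, Theorem \ref{thm:spectral_bound} gives $r_j\to c$ on $S_n$. Moreover $c>0$ because $\vec v^1$ is strictly positive and $\vec x\neq\vec 0$. So $\bar r\to c>0$, and $\log\bar r$ is bounded. Taking $\frac1n\log$ and using the exact limit from Theorem \ref{thm:general_total_var} then gives
$$-(\lambda_1-\lambda_2)\le\limsup_n\frac1n\log\max_{j\in S_n}\abs{r_j-c}.$$

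The exceptional sets are handled as follows. The null set from Theorem \ref{thm:general_total_var} is Lebesgue null. Theorem \ref{thm:spectral_bound} holds for every pair, and only countably many a.s.\ statements are used. So the conclusion holds for a.e.\ pair, almost surely.

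The main obstacle I expect is making the convergence $r_j\to c$ uniform over a \emph{varying} support $S_n$. Rows $i$ leave and re-enter $S_n$ infinitely often (the virtual nodes), and Theorem \ref{thm:spectral_bound} only controls the limsup along times when row $i$ is nonzero. Since there are finitely many $i$ and each individual statement restricts to $n$ with $\vec e_i^\top M_n\neq\vec 0$, the maximum over $S_n$ is still controlled. I would verify this carefully, together with the fact that $S_n\neq\emptyset$ for all $n$, which holds by column-allowability. Everything else is the elementary inequality above.
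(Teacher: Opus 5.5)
Your proposal is correct and matches the paper's approach. The paper simply reruns the original sharpness argument of \cite[Theorem 21]{gerencser2021tight} with every $\max$ and $\min$ over $i$ restricted to the nonempty set of indices with $\vec e_i^\top M_n\neq\vec 0$, which is what your weighted-average comparison against the exact total-variation rate of Theorem~\ref{thm:general_total_var} does, with the upper bound taken from Theorem~\ref{thm:spectral_bound}.

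The one point you leave implicit is that every row of $M_n$ stays strictly positive or zero for all $n\geq\tau$, not only at $n=\tau$. This is immediate, because left multiplication by a nonnegative matrix preserves that property.
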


\section{Notes on Lyapunov exponents}
\label{sec:lyap}

In this section, we discuss some basic properties of the Lyapunov exponents, based on the articles \cite{oseledets1968multiplicative}, \cite{raghunathan1979proof}, and \cite{gerencser2021tight}.

Let $(A_n)_{n=1}^\infty$ be a (deterministic) sequence of $p\times p$ matrices. Then for any $\vec x\in\bbR^p$, the Lyapunov exponent of $\vec x$ w.r.t. the sequence $(A_n)_{n=1}^\infty$ is defined as $$\lambda(\vec x)=\limsup_{n\to\infty}\frac1n\log\abs{A_nA_{n-1}\cdots A_1\vec x}.$$

Then for any extended real number $-\infty\leq\mu\leq\infty$ let us define the set $$L_\mu=\{\vec x\in\bbR^p\mid\lambda(\vec x)\leq\mu\}.$$ $L_\mu$ is a linear subspace of $\bbR^p$, monotone increasing in $\mu$ and continuous from the right. Since there can only be finitely many strictly decreasing subspaces of $\bbR^p$, there are only finitely many possible values of the Lyapunov exponents: $\infty\geq\mu_1>\mu_2>\dots\mu_q\geq-\infty$ such that $$\bbR^p=L_{\mu_1}\supsetneq L_{\mu_2}\supsetneq\cdots\supsetneq L_{\mu_q}\supsetneq\{0\}=:L_{\mu_{q+1}}$$ and $L_\mu$ is a piecewise constant function of $\mu$ with discontinuities exactly at $\mu_i.$ So for $1\leq r\leq q,$ $x\in L_{\mu_r}\setminus L_{\mu_{r+1}}$ implies $\lambda(x)=\mu_r.$  Let us denote $i_r$ the dimension of the subspace $L_{\mu_r}$ with $1\leq r\leq q+1.$ Then, the codimension of $L_{\mu_{r+1}}$ relative to $L_{\mu_r}$ is $i_r-i_{r+1}$ which can be interpreted as the multiplicity of the Lyapunov exponent $\mu_r.$ Then, the full spectrum of Lyapunov exponents is $\infty\geq\lambda_1\geq\lambda_2\geq\cdots\geq\lambda_p\geq-\infty$ by setting $\lambda_i=\mu_r$ if $i_r\geq i>i_{r+1}$ for $1\leq i\leq p.$

When we get $(A_n)_{n=1}^\infty$ as a realization of a strictly stationary ergodic process, the following theorem holds.

\begin{theorem}[Oseledec's theorem \cite{oseledets1968multiplicative}]
 Under Condition \ref{cond:oseledec} the following holds. There exists a subset $\Omega'\subset\Omega$ with $\bbP(\Omega')=1$ such that for all $\omega\in\Omega'$ and for any $\vec x\in\bbR^p,$ the following limit exists: $$\lambda(\vec x)=\lim_{n\to\infty}\frac1n\log\abs{A_nA_{n-1}\dots A_1\vec x}.$$ Moreover the Lyapunov exponents $\lambda_1\geq\lambda_2\geq\dots\geq\lambda_p,$
 possibly taking the value $-\infty,$ do not depend on $\omega\in\Omega'$ either.
 The mapping $\omega\mapsto L_{\mu_r}(\omega)$
 is measurable from $\Omega$ to the Grassmannian manifold of linear subspaces of dimension $i_r.$ 
 In addition, we have with $M_n=A_nA_{n-1}\dots A_1$ the following limit almost surely exists: $$M^*=\lim(M_n^\top M_n)^{1/2n}$$
\end{theorem}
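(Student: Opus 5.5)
This is Oseledec's multiplicative ergodic theorem, cited from \cite{oseledets1968multiplicative}; in the paper it is quoted as a known result rather than proved. I would therefore not reprove it in full but reduce it to standard ingredients: Kingman's subadditive ergodic theorem applied to exterior powers, followed by Raghunathan's linear-algebraic argument \cite{raghunathan1979proof}.

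\textbf{Step 1: deterministic exponents.} For each $k\le p$, set $a_n^{(k)}=\log\norm{\Lambda^k M_n}$. Since $\norm{\Lambda^k(AB)}\le\norm{\Lambda^kA}\,\norm{\Lambda^kB}$, this is a subadditive cocycle over the stationary ergodic shift. Condition \ref{cond:oseledec} gives $\bbE (a_1^{(k)})^+\le k\,\bbE\log\norm{A_1}^+<\infty$. Kingman's theorem then yields almost surely
$$\tfrac1n\log\norm{\Lambda^kM_n}\to\gamma_k\in[-\infty,\infty),$$
where $\gamma_k$ is deterministic by ergodicity. Define $\lambda_1+\dots+\lambda_k=\gamma_k$, with the usual convention when some $\gamma_k=-\infty$. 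This already shows the exponents do not depend on $\omega$.

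\textbf{Step 2: existence of $M^*$.} Write $M_n^\top M_n$ through its singular values $s_1(n)\ge\dots\ge s_p(n)$ and the corresponding orthonormal eigenvectors. Step 1 gives $\frac1n\log s_k(n)\to\lambda_k$. The delicate point is convergence of the eigenspaces. Following Raghunathan, compare $M_{n+1}$ with $M_n$: because $\frac1n\log^+\norm{A_{n+1}}\to0$ a.s. (Borel--Cantelli from integrability), the angle between the span of the top $i$ singular directions of $M_n$ and that of $M_{n+1}$ is at most $e^{-n(\lambda_i-\lambda_{i+1})+o(n)}$ whenever $\lambda_i>\lambda_{i+1}$. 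These angles are summable, so the filtration of eigenspaces of $(M_n^\top M_n)^{1/2n}$ converges. Together with the convergence of $s_k(n)^{1/n}$, this gives the almost sure limit $M^*$. When some exponents equal $-\infty$, treat the corresponding block separately; the bound still goes through with exponents replaced by arbitrarily negative constants.

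\textbf{Step 3: $\lambda(\vec x)$ and measurability.} Let $L_{\mu_r}$ be the sum of the eigenspaces of $M^*$ with eigenvalues at most $e^{\mu_r}$. The tail estimates from Step 2 give $\frac1n\log\abs{M_n\vec x}\to\mu_r$ for $\vec x\in L_{\mu_r}\setminus L_{\mu_{r+1}}$, so the limit defining $\lambda(\vec x)$ exists. Measurability of $\omega\mapsto L_{\mu_r}$ into the Grassmannian follows because $L_{\mu_r}$ is a pointwise limit of measurable spectral projections of $M_n^\top M_n$.

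\textbf{Main obstacle.} The hard part is the perturbation estimate in Step 2, namely summability of the angle changes. It needs a careful bound of the form
$$\sin\angle\le \norm{A_{n+1}}\,\norm{A_{n+1}^{-1}}\,\frac{s_{i+1}(n)}{s_i(n)},$$
or, for non-invertible $A_n$, a version that uses only the forward norm bound $\norm{A_{n+1}}$. Since this paper takes the theorem as given, I would cite \cite{oseledets1968multiplicative,raghunathan1979proof} for it.
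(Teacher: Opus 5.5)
Your proposal agrees with the paper, which likewise quotes Oseledec's theorem from \cite{oseledets1968multiplicative} and \cite{raghunathan1979proof} without proof, and your sketch (Kingman's theorem on exterior powers followed by Raghunathan's argument) is the standard route to it. One refinement: the matrices in this paper are typically singular (zero rows at virtual nodes), so use the forward-only form of the angle estimate, namely that the sine of the angle between the top-$i$ right singular subspaces of $M_n$ and $M_{n+1}$ is at most $\norm{A_{n+1}}\,s_{i+1}(n)/s_i(n+1)$; this needs no $\norm{A_{n+1}^{-1}}$, only $\frac1n\log s_i(n+1)\to\lambda_i$ from your Step 1.
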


The singular value decomposition of the matrix $M_n$ gives $$M_n=U_n\Sigma_nV_n$$ where $U_n$ and $V_n$ are orthonormal matrices, and $\Sigma_n$ is a diagonal matrix with entries $\sigma_n^1\geq\sigma_n^2\geq\ldots\geq\sigma_n^p\geq0$ such that almost surely for all $k=1,\ldots,p$ we have $$\lambda_k=\lim_{n\to\infty}\frac1n\log\sigma_n^k$$ and the matrix $M_n$ is asymptotically of rank-$1$ which can be quantified as follows.

\begin{lemma}[{\cite[Lemma 5]{raghunathan1979proof}}]\label{lemma:first_order_approx}
Under Condition \ref{cond:oseledec} and assuming that there is a positive spectral gap, i.e. $\lambda_1>\lambda_2,$ the first row of the matrix $V_n$ converges almost surely to a (random) vector: $$\lim_{n\to\infty}\vec v_n^1=\vec v^1.$$
The matrix product $M_n=A_nA_{n-1}\dots A_1$ can be written as $$M_n=\vec u_n^1\vec v^1\sigma_n^1+O(e^{(\lambda_2+o(1))n}),$$ where $\vec u_n^1$ is the first column of the matrix $U_n.$
\end{lemma}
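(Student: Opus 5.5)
The statement to prove is Lemma \ref{lemma:first_order_approx}, Raghunathan's first-order approximation. I would derive it from the singular value decomposition $M_n=U_n\Sigma_nV_n$ together with the almost sure convergence $\frac1n\log\sigma_n^k\to\lambda_k$ supplied by Oseledec's theorem. Writing $\vec v_n^k$ for the $k$th row of $V_n$ and $\vec u_n^k$ for the $k$th column of $U_n$, we have
$$M_n=\sum_{k=1}^p\sigma_n^k\vec u_n^k\vec v_n^k.$$
The terms with $k\ge2$ have norm at most $\sigma_n^2=e^{(\lambda_2+o(1))n}$, so
$$M_n=\sigma_n^1\vec u_n^1\vec v_n^1+O(e^{(\lambda_2+o(1))n}).$$
Two things then remain: show that $\vec v_n^1$ converges, and show that replacing $\vec v_n^1$ by its limit $\vec v^1$ costs only $O(e^{(\lambda_2+o(1))n})$ after multiplication by $\sigma_n^1$.

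\textbf{Step 1: $\vec v_n^1$ is a Cauchy sequence.} The rows of $V_n$ are eigenvectors of $M_n^\top M_n$, and $\vec v_n^1$ is determined only up to sign, which I fix by a consistent convention. I compare $M_{n+1}^\top M_{n+1}=M_n^\top A_{n+1}^\top A_{n+1}M_n$ with $M_n^\top M_n$. Since $\mathbb E\log\lVert A_1\rVert^+<\infty$, Birkhoff's theorem (Borel--Cantelli) gives $\lVert A_{n+1}\rVert=e^{o(n)}$ almost surely.

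Take a unit vector $\vec y$ orthogonal to $\vec v_n^1$. Then $\lvert M_n\vec y\rvert\le\sigma_n^2$, hence $\lvert M_{n+1}\vec y\rvert\le e^{o(n)}\sigma_n^2$.

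On the other hand, $\sigma_{n+1}^1\ge\lvert M_{n+1}\vec v_{n+1}^1\rvert$, and $\sigma_{n+1}^1\ge e^{(\lambda_1-o(1))n}$. Decompose $\vec v_{n+1}^1=\alpha\vec v_n^1+\beta\vec y$ with $\vec y\perp\vec v_n^1$. Then
$$\lvert M_{n+1}\vec v_{n+1}^1\rvert\le\lvert\alpha\rvert\,\lvert M_{n+1}\vec v_n^1\rvert+\lvert\beta\rvert e^{o(n)}\sigma_n^2 .$$

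A cleaner route to the same conclusion runs through the orthogonal complement. Let $\vec z_n$ be the unit vector in $\operatorname{span}(\vec v_n^1,\vec v_{n+1}^1)$ orthogonal to $\vec v_{n+1}^1$. Then $\lvert M_{n+1}\vec z_n\rvert\le\sigma_{n+1}^2$, while also
$$\lvert M_{n+1}\vec z_n\rvert\ge\lvert\sin\angle(\vec v_n^1,\vec v_{n+1}^1)\rvert\,\lvert M_{n+1}\vec v_n^1\rvert-e^{o(n)}\sigma_n^2 .$$
Combining the two bounds yields
$$\sin\angle(\vec v_n^1,\vec v_{n+1}^1)\le e^{-(\lambda_1-\lambda_2-o(1))n}.$$
Because $\lambda_1>\lambda_2$, this bound is summable, so $\vec v_n^1\to\vec v^1$. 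Summing the tail also gives
$$\lvert\vec v_n^1-\vec v^1\rvert\le e^{-(\lambda_1-\lambda_2-o(1))n}.$$

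\textbf{Main obstacle.} The key inequality needs a lower bound on $\lvert M_{n+1}\vec v_n^1\rvert$ of order $e^{(\lambda_1-o(1))n}$. For this I would use
$$\lvert M_{n+1}\vec v_{n+1}^1\rvert=\sigma_{n+1}^1 \quad\text{and}\quad \sigma_{n+1}^1\le\lvert M_{n+1}\vec v_n^1\rvert+\lvert M_{n+1}(\vec v_{n+1}^1-\vec v_n^1)\rvert ,$$
which makes the argument a bootstrap. It has to be set up carefully: first establish closeness of the one-step angles coarsely, then refine to the rate. This part is Raghunathan's core estimate. I would follow his proof by working with exterior powers, or equivalently by treating the angles between the top singular directions as the controlled quantity.

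\textbf{Step 2: replacing $\vec v_n^1$ by $\vec v^1$.} This uses $\sigma_n^1\lvert M_n(\vec v_n^1-\vec v^1)\rvert$-type estimates. Write
$$\sigma_n^1\vec u_n^1(\vec v_n^1-\vec v^1)=M_n\Pi_n ,$$
where $\Pi_n$ is the projection onto $\vec v_n^1$ applied to the difference. The crude bound $\sigma_n^1\lvert\vec v_n^1-\vec v^1\rvert$ would give only $e^{(\lambda_2+o(1))n}$ if $\lvert\vec v_n^1-\vec v^1\rvert\le e^{-(\lambda_1-\lambda_2-o(1))n}$, and that is exactly what Step 1 provides. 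Therefore
$$M_n=\vec u_n^1\vec v^1\sigma_n^1+O(e^{(\lambda_2+o(1))n}).$$
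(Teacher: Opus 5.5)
The paper gives no proof of this lemma; it is quoted from Raghunathan. Your outline is the standard argument: the singular value expansion, a one-step bound on the angle $\theta_n$ between $\vec v_n^1$ and $\vec v_{n+1}^1$, tail summation giving $\lvert\vec v_n^1-\vec v^1\rvert\le e^{-(\lambda_1-\lambda_2-o(1))n}$, and then the crude bound $\sigma_n^1\lvert\vec v_n^1-\vec v^1\rvert\le e^{(\lambda_2+o(1))n}$. The tail summation and Step 2 are fine; the $M_n\Pi_n$ remark is superfluous, since you only use the crude bound.

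The gap is the step you yourself call the main obstacle. As written, the lower bound $\lvert M_{n+1}\vec v_n^1\rvert\ge e^{(\lambda_1-o(1))n}$, and with it the one-step angle estimate on which everything rests, is not proved. It is deferred to an unspecified bootstrap or to exterior powers, and neither is needed. The apparent circularity comes only from bounding $\lvert M_{n+1}(\vec v_{n+1}^1-\vec v_n^1)\rvert$, which still contains a $\vec v_n^1$-component. Your first decomposition already closes it. Write $\vec v_{n+1}^1=\alpha\vec v_n^1+\beta\vec y$ with $\vec y\perp\vec v_n^1$ a unit vector and $\alpha^2+\beta^2=1$. Then $\sigma_{n+1}^1\le\lvert M_{n+1}\vec v_n^1\rvert+\lVert A_{n+1}\rVert\sigma_n^2$. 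Hence $\lvert M_{n+1}\vec v_n^1\rvert\ge\sigma_{n+1}^1-e^{(\lambda_2+o(1))n}\ge\frac12\sigma_{n+1}^1$ for large $n$, because $\lambda_1>\lambda_2$.

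It is even more direct to test with $\vec y$ instead of $\vec z_n$. Take $\vec y$ to be the unit vector of $\operatorname{span}(\vec v_n^1,\vec v_{n+1}^1)$ orthogonal to $\vec v_n^1$. Fix the sign of $\vec v_{n+1}^1$, and of $\vec u_{n+1}^1$ with it, so that $\cos\theta_n\ge 0$. For large $n$ the gap gives $\sigma_n^1>\sigma_n^2$, so the sign is the only ambiguity. Three facts then give the bound:
\begin{itemize}
\item For every unit vector $\vec x$, $\lvert M_{n+1}\vec x\rvert^2=\sum_k(\sigma_{n+1}^k)^2\langle\vec x,\vec v_{n+1}^k\rangle^2\ge(\sigma_{n+1}^1)^2\langle\vec x,\vec v_{n+1}^1\rangle^2$.
\item $\lvert\langle\vec y,\vec v_{n+1}^1\rangle\rvert=\sin\theta_n$.
\item $\lvert M_{n+1}\vec y\rvert\le\lVert A_{n+1}\rVert\,\lvert M_n\vec y\rvert\le\lVert A_{n+1}\rVert\sigma_n^2$.
\end{itemize}
Hence
\[
\sin\theta_n\le\frac{\lVert A_{n+1}\rVert\,\sigma_n^2}{\sigma_{n+1}^1}=e^{-(\lambda_1-\lambda_2-o(1))n}.
\]
This uses only $\lVert A_{n+1}\rVert=e^{o(n)}$ and $\frac1n\log\sigma_n^k\to\lambda_k$ from Oseledec's theorem. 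No lower bound on $\lvert M_{n+1}\vec v_n^1\rvert$ and no exterior-power argument is required for the top singular direction. With this inserted, the rest of your proof goes through as written.
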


\section{Bounds on the convergence rate}\label{sec:bounds_on_rate}
When only minor modifications are needed to generalize the proofs, we do not write them out here in their full length.

In this Section, we provide the proofs for the theorems regarding the bounds on the convergence speed: Theorems \ref{thm:general_total_var}, \ref{thm:spectral_bound}, and \ref{thm:col_stoch_spectral_bound}. The main novelty is in the proofs of the lemmas required for the theorems, they are combined to finish the proofs in a similar way as in \cite{gerencser2021tight}.

We first establish two of these technical lemmas.

\begin{lemma}[{Generalization of \cite[Lemma 27]{gerencser2021tight}}]
\label{lemma:u_subexp}
 Under the Conditions \ref{cond:subexp_elements} and \ref{cond:oseledec}, and the additional assumption that $\lambda_1>\lambda_2$, it holds that for all $i$, $1/\vec u^{1i}_n$ is weakly subexponential almost surely. Moreover, if in Condition \ref{cond:subexp_elements} subexponentiality holds (i.e. $\infty$ is not allowed) then $1/\vec u^{1i}_n$ is subexponential almost surely.
\end{lemma}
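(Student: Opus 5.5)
The plan is to follow the structure of the proof of \cite[Lemma 27]{gerencser2021tight}. The new work is keeping track of the events where $\vec e_i^\top M_n$ has zero entries, which Condition \ref{cond:subexp_elements} now allows. First, $M_n$ is nonnegative, so $M_n^\top M_n$ is nonnegative, and by Perron--Frobenius its top eigenvector can be chosen nonnegative. Hence we may take $\vec v_n^1\ge 0$ and $\vec u_n^1 = M_n \vec v_n^1/\sigma_n^1 \ge 0$, and consequently $\vec v^1 = \lim_n \vec v_n^1\ge 0$ (Lemma \ref{lemma:first_order_approx}) is a nonzero unit vector. The key identity is exact:
$$\vec u_n^{1i} = \frac{\vec e_i^\top M_n \vec v_n^1}{\sigma_n^1}=\frac{1}{\sigma_n^1}\sum_k M_n^{ik}\vec v_n^{1k}.$$
In particular, if row $i$ of $M_n$ vanishes, then $\vec u_n^{1i}=0$ and $1/\vec u_n^{1i}=\infty$, which weak subexponentiality permits.

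Next, fix a column $k$ with $\vec v^{1k}>0$, so that $\vec v_n^{1k}\ge \vec v^{1k}/2$ for large $n$. I will produce a row $j=j(n)$ with $M_n^{jk}$ of order $\sigma_n^1$. From Lemma \ref{lemma:first_order_approx}, $M_n\vec e_k = \vec u_n^1 \vec v^{1k}\sigma_n^1 + O(e^{(\lambda_2+o(1))n})$. Since $\abs{\vec u_n^1}=1$, some coordinate satisfies $\vec u_n^{1j}\ge 1/\sqrt p$. Also $\sigma_n^1=e^{(\lambda_1+o(1))n}$ with $\lambda_1>\lambda_2$, so the error term is negligible, and for large $n$
$$M_n^{jk}\ge \frac{\vec v^{1k}}{2\sqrt p}\,\sigma_n^1.$$
Now apply Condition \ref{cond:subexp_elements} to the ratio $M_n^{jk}/M_n^{ik}$ for each of the finitely many $j$, together with a union over $j$. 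For every $\eps>0$, for all but finitely many $n$, either $M_n^{ik}=0$ or $M_n^{ik}\ge e^{-\eps n}M_n^{jk}$. In the latter case the identity gives
$$\vec u_n^{1i}\ge M_n^{ik}\vec v_n^{1k}/\sigma_n^1\ge c\,e^{-\eps n}$$
with $c=(\vec v^{1k})^2/(4\sqrt p)>0$ random but independent of $n$, so $1/\vec u_n^{1i}\le e^{2\eps n}$ eventually. If subexponentiality holds without the value $\infty$, the case $M_n^{ik}=0$ never occurs, and this argument already yields the second claim of the lemma.

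The main obstacle is the remaining case: $M_n^{ik}=0$ while row $i$ of $M_n$ is nonzero, so that $\vec u_n^{1i}$ is positive but possibly tiny. To handle it I will run the above argument simultaneously for all $k$ in the support $S$ of $\vec v^1$. If $M_n^{ik}>0$ for some $k\in S$, the bound above applies. So the bad event is that row $i$ vanishes on $S$ but not off $S$. On that event
$$\vec u_n^{1i}=\frac{1}{\sigma_n^1}\sum_{k\notin S}M_n^{ik}\vec v_n^{1k},\qquad \text{with } \vec v_n^{1k}\to 0 \text{ for } k\notin S.$$
I would first try to show that $S$ is all of $\{1,\dots,p\}$ (so that the bad event is empty). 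The idea is to use the subexponential ratio condition column by column: for each column $k$, Condition \ref{cond:subexp_elements} forces the nonzero entries of column $k$ of $M_n$ to be comparable up to $e^{\eps n}$. If some $\vec v^{1k}=0$, column $k$ would have to grow at a rate $\le\lambda_2$, and I would look for a contradiction with this comparability. If that fails, the fallback is to compare row $i$ with row $j$ on a column $k'\notin S$, where ratios are also controlled, and bound $\vec u_n^{1i}$ below using $\vec e_j^\top M_n\vec v_n^1$. I am least certain that this fallback closes the argument.
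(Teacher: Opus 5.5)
Your direct argument is correct as far as it goes. The identity $\vec u_n^{1i}=(\sigma_n^1)^{-1}\sum_k M_n^{ik}\vec v_n^{1k}$, a column $k$ with $\vec v^{1k}>0$, and a row $j(n)$ with $\vec u_n^{1j}\ge 1/\sqrt p$ together give $\vec u_n^{1i}\ge c\,e^{-\eps n}$ whenever $M_n^{ik}>0$. This already proves the second (strong) claim. The gap is exactly the case you flag, and neither of your two escape routes can close it: under the hypotheses actually stated in the lemma, the first claim is false. Take the constant sequence $A_n=A$,
\[
A=\begin{pmatrix}2&0&0\\0&1&0\\1&1&1\end{pmatrix},\qquad M_n=\begin{pmatrix}2^n&0&0\\0&1&0\\2^n-1&n&1\end{pmatrix}.
\]
Condition \ref{cond:oseledec} is trivial and $\lambda_1=\log 2>0=\lambda_2$. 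Every finite nonzero ratio of two entries in a common column is at most $\max\{2,n\}$; the remaining ratios are $0$, $\infty$ or $0/0$. The paper has to tolerate $0/0$ anyway, since it allows several rows of $M_n$ to vanish at once. So Condition \ref{cond:subexp_elements} holds. Here $\vec v^1=(1,0,0)$, so $S=\{1\}$ and your first idea fails: column $2$ grows only polynomially and its nonzero entries are comparable, so there is nothing to contradict. Since $M_n^\top M_n$ is entrywise positive, Perron--Frobenius gives $\vec v_n^{12}>0$. Because row $2$ of $M_n$ is $(0,1,0)$,
\[
0<\vec u_n^{12}=\frac{\vec v_n^{12}}{\sigma_n^1}\le\frac{1}{\sigma_n^1}\le 2^{-n}\qquad\text{for every } n,
\]
so $1/\vec u_n^{12}$ is finite and exponentially large. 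This is precisely your bad event. It also shows why the fallback fails: comparability of $M_n^{ik'}$ and $M_n^{jk'}$ on a column $k'\notin S$ is useless, because the weight $\vec v_n^{1k'}$ multiplying them is itself exponentially small.

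The missing ingredient is weak sequential primitivity. The lemma does not assume it, but Theorem \ref{thm:spectral_bound}, where the lemma is used, does. Left multiplication by a nonnegative matrix preserves the property that every row is strictly positive or zero. Hence for $n\ge\tau$ every nonzero row of $M_n$ is strictly positive, and the bad event is empty: either $\vec e_i^\top M_n=\vec 0$ and $1/\vec u_n^{1i}=\infty$, or $M_n^{ik}>0$ for your chosen $k\in S$ and your bound applies. With that hypothesis added, your proof is complete; it would also need to be checked for the reversed process when Lemma \ref{lemma:v_pos} invokes the lemma. For comparison, the paper's proof uses the same ingredients (a column $k$ with $\vec v^{1k}>0$ and a row $j$ with $\vec u_n^{1j}>1/\sqrt p$) but argues by contradiction, and it has the same hole. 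From $0<\vec u_n^{1i}\le e^{-\eps n}$ it concludes $M_n^{jk}/M_n^{ik}<\infty$, i.e. $M_n^{ik}>0$. That step is not justified and fails in the example above with $i=2$, $k=1$.
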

\begin{proof}
 The structure of the proof is the same as of the original. Because there is a positive spectral gap, we almost surely have $M_n=\vec u_n^1\vec v^1\sigma_n^1+O(e^{(\lambda_2+o(1))n})$ by Lemma \ref{lemma:first_order_approx}. Take an arbitrary pair of row indices $i,j$, and compare the rows $M_n^j$ and $M_n^i.$ Choose a column index $k$ such that $\vec v^{1k}>0.$ Now we can write $$\frac{M^{jk}_n}{M^{ik}_n}=\frac{\vec u_n^{j1}\vec v^{1k}\sigma_n^1+O(e^{(\lambda_2+o(1))n})}{\vec u_n^{i1}\vec v^{1k}\sigma_n^1+O(e^{(\lambda_2+o(1))n})}=\frac{\vec u_n^{j1}+O(e^{(-\lambda_1+\lambda_2+o(1))n})}{\vec u_n^{i1}+O(e^{(-\lambda_1+\lambda_2+o(1))n})}.$$
 Indirectly suppose that $1/\vec u^{i1}_n$ is not weakly subexponential. Then for an $\eps>0$, we have with positive probability $\infty>1/\vec u^{i1}_n\geq e^{\eps n}$ along an infinite sequence, which is the same as $0<\vec u^{i1}_n\leq e^{-\eps n}.$ Because of the orthogonality of the matrix $U_n$ we can pick a $j$ such that $\infty>\vec u^{j1}_n>1/\sqrt p$ holds for an infinite subsequence. Combining these together we get that with positive probability on an infinite sequence the following inequality holds with some constant $C>0$: $$Ce^{\eps n}<\frac{M^{jk}_n}{M^{ik}_n}<\infty.$$ This is a contradiction to Condition \ref{cond:subexp_elements}.

For the second claim, supposing that $1/\vec u^{i1}_n$ is not subexponential gives $\infty\geq1/\vec u^{i1}_n\geq e^{\eps n}$. Combining this with the inequality $\infty>\vec u^{j1}_n>1/\sqrt p,$ we get $$Ce^{\eps n}<\frac{M^{jk}_n}{M^{ik}_n}\leq\infty.$$ holds with positive probability which gives a contradiction to the subexponential version of Condition \ref{cond:subexp_elements}.
\end{proof}

\begin{lemma}[{Generalization of \cite[Lemma 28]{gerencser2021tight}}]
 \label{lemma:v_pos}
 Under the conditions of Theorem \ref{thm:general_total_var}, with the additional assumption that $\lambda_1>\lambda_2$, and Condition \ref{cond:subexp_elements}, it holds that $v^{1i}>0$ for all $i=1,\dots,p.$
\end{lemma}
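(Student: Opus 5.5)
We argue by contradiction and use the first order approximation from Lemma \ref{lemma:first_order_approx}. First we fix the sign convention. Because $M_n$ is nonnegative and asymptotically of rank one, we may choose the signs of $\vec u_n^1$ and $\vec v_n^1$ so that $\vec v^1$ has nonnegative entries and $\vec u_n^1$ has nonnegative entries up to exponentially small errors. The top singular vectors of a nonnegative matrix can be taken nonnegative by Perron--Frobenius applied to $M_n^\top M_n$ and $M_nM_n^\top$. Since $\vec v^1$ is a unit vector, there is some column index $k$ with $\vec v^{1k}>0$.

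Now suppose, for contradiction, that $\vec v^{1l}=0$ for some $l$. By Lemma \ref{lemma:first_order_approx}, column $l$ of $M_n$ then satisfies $M_n\vec e_l=O(e^{(\lambda_2+o(1))n})$. Column $k$, by contrast, satisfies $M_n^{ik}=\vec u_n^{i1}\vec v^{1k}\sigma_n^1+O(e^{(\lambda_2+o(1))n})$. We pick a row $i$ with $\vec u_n^{i1}>1/\sqrt p$ along a subsequence, which is possible by the orthonormality argument used in the proof of Lemma \ref{lemma:u_subexp}. Along that subsequence $M_n^{ik}\ge c\,e^{(\lambda_1-o(1))n}$, while $M_n^{il}\le e^{(\lambda_2+o(1))n}$.

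The remaining task is to relate column $l$ to column $k$ within the same row, using weak sequential primitivity. After the stopping time $\tau$, every row of $M_\tau$ is either strictly positive or zero. Hence for $n\ge\tau$ we can write $M_n=(A_n\cdots A_{\tau+1})M_\tau$. If row $i$ of $M_n$ is nonzero, it is a nonnegative combination of the nonzero rows of $M_\tau$, which are strictly positive. This gives the comparison
\[
M_n^{il}\ \ge\ c_\tau\, M_n^{ik}\quad\text{with } c_\tau=\min_{r,\,s,s'}\frac{M_\tau^{rs}}{M_\tau^{rs'}}>0,
\]
where the minimum runs over the rows $r$ for which $M_\tau^r\ne\vec 0$. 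The constant $c_\tau$ is random but finite almost surely and does not depend on $n$. Combining the bounds yields $c_\tau c\, e^{(\lambda_1-o(1))n}\le e^{(\lambda_2+o(1))n}$ along an infinite subsequence. This contradicts $\lambda_1>\lambda_2$, so $\vec v^{1l}>0$ for every $l$.

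I expect the main obstacle to be the setup rather than the final inequality. First, the sign choice has to be justified so that $\vec v^1\ge0$ and the lower bound $\vec u_n^{i1}>1/\sqrt p$ holds with the correct sign. Second, the chosen row $i$ must have $\vec e_i^\top M_n\neq\vec 0$ for the relevant $n$; this holds automatically because $M_n^{ik}$ is exponentially large there. Column-allowability is not essential to this argument, and neither is Condition \ref{cond:subexp_elements} beyond its role in providing the setting. The key input is that weak sequential primitivity makes the ratio between entries in the same nonzero row bounded by a constant fixed at time $\tau$.
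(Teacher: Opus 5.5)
Your proof is correct, and it takes a different route from the paper's.

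\emph{The paper's route.} The paper follows the original proof of \cite[Lemma 28]{gerencser2021tight}. It passes to the time-reversed, transposed process $(\bar A_n)$, in which $\vec v_n^1$ plays the role of $\vec u_n^1$. It then applies the weakly subexponential version of Lemma \ref{lemma:u_subexp} to that process, so that $\vec v_n^{1i}$ cannot be exponentially small. Perron--Frobenius ($\vec v_n^1>0$ after $\tau$) rules out the value $\infty$ that weak subexponentiality allows. Finally, if $\vec v^{1i}=0$, the exponentially fast convergence $\vec v_n^1\to\vec v^1$ would force $\vec v_n^{1i}$ to be exponentially small, a contradiction.

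\emph{Your route.} You stay with the forward process and argue pathwise. The first order approximation makes column $l$ exponentially smaller than column $k$ in a row with $\vec u_n^{i1}\ge 1/\sqrt p$. Weak sequential primitivity, through $M_n=(A_n\cdots A_{\tau+1})M_\tau$, bounds every within-row ratio of $M_n$ by the constant $c_\tau$ fixed at time $\tau$.

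\emph{Comparison.} Your version is more elementary and self-contained. It avoids the reversal, which matches $M_n^\top$ only in distribution for each fixed $n$ and so needs a transfer argument from almost sure statements about $\bar M_n$. It also shows that Condition \ref{cond:subexp_elements} is not needed for this lemma: only Condition \ref{cond:oseledec}, weak sequential primitivity and $\lambda_1>\lambda_2$ are used. In effect, you prove directly, and in the stronger bounded form, the within-row ratio property that applying Lemma \ref{lemma:u_subexp} to the reversed process implicitly requires. The paper's route instead mirrors the forward/backward symmetry of the original framework and reuses Lemma \ref{lemma:u_subexp} without a new estimate.

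\emph{One small tightening.} For $n\ge\tau$, the Perron choice $\vec v_n^1>0$ gives $\vec u_n^1=M_n(\vec v_n^1)^\top/\sigma_n^1\ge 0$ exactly. Your hedge \emph{up to exponentially small errors} is therefore unnecessary.
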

\begin{proof}
 In principle, the same proof works as in \cite{gerencser2021tight}. The difference is that we need to apply the weakly subexponential version of Lemma \ref{lemma:u_subexp} instead of Lemma 27 to the reversed process $(\bar A_n)$ and for the contradiction we also have to notice that the vector $v_n^1$ is strictly positive due to the Perron-Frobenius theorem.
\end{proof}

\subsection{Proof of Theorem \ref{thm:general_total_var}}

\begin{lemma}[{Generalization of \cite[Lemma 9]{gerencser2021tight}}]
\label{lemma:first_dominates}
Assume that Condition \ref{cond:oseledec} holds.
 In addition, assume that $(A_n)_{n=1}^\infty$ is nonnegative and \textbf{weakly sequentially primitive}. Then, there exists a subset $\Omega'\subset\Omega$ with $P(\Omega')=1$ such that for all $\omega\in\Omega'$ it holds that any strictly positive vector $\vec x>0,\vec x\in\bbR^p$ is contained in $\vec x\in L_{\mu_1}\setminus L_{\mu_2}:$ $$\lambda_1=\lim_{n\to\infty}\frac1n\log\abs{A_nA_{n-1}\cdots A_1\vec x}.$$

 Moreover, if we also assume that the conditions of Lemma \ref{lemma:v_pos} hold, we can relax the condition of $\vec x$ being strictly positive to be nonnegative and nonzero.
\end{lemma}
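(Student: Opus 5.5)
We show that for a positive vector $\vec x$ the growth of $M_n\vec x$ is governed by the largest singular value of $M_n$. By Oseledec's theorem, fix the full-measure set $\Omega'$ on which the limits $\lambda(\vec y)$ exist for all $\vec y$ and $\frac1n\log\sigma_n^1\to\lambda_1$. Intersect it with the almost sure event $\{\tau<\infty\}$ from weak sequential primitivity. For the upper bound, $\abs{M_n\vec x}\le\sigma_n^1\abs{\vec x}$ immediately gives $\lambda(\vec x)\le\lambda_1$. So only the lower bound $\liminf\frac1n\log\abs{M_n\vec x}\ge\lambda_1$ needs proof.

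For the lower bound, use nonnegativity and positivity of $\vec x$ to compare with the all-ones vector. Set $c=\min_k x^k>0$. Since $M_n$ is entrywise nonnegative, $M_n\vec x\ge c M_n\1$ coordinatewise, so $\abs{M_n\vec x}\ge c\abs{M_n\1}$. Moreover every entry $M_n^{ik}\le (M_n\1)^i$, hence $\norm{M_n}\le C_p\abs{M_n\1}$ for a dimension constant $C_p$ (any matrix norm). Therefore
$$\abs{M_n\vec x}\ge c\,C_p^{-1}\norm{M_n}\ge c'\sigma_n^1,$$
which gives $\lambda(\vec x)\ge\lambda_1$. Thus $\vec x\notin L_{\mu_2}$ whenever $\mu_2<\mu_1=\lambda_1$. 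If $\lambda_1$ has multiplicity $p$, then $L_{\mu_2}=\{0\}$ and the conclusion still holds. This part uses only nonnegativity. Weak sequential primitivity is not really needed here, although it guarantees that $M_n\ne0$ eventually (column-allowability gives $M_n\1\neq0$ anyway). The lower bound is also uniform in $\vec x$ on $\Omega'$, since no exceptional set depends on $\vec x$.

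For the second claim, let $\vec x\ge0$ be nonzero, and use the first-order approximation of Lemma \ref{lemma:first_order_approx}, available since $\lambda_1>\lambda_2$:
$$M_n\vec x=\vec u_n^1\sigma_n^1(\vec v^1\vec x)+O\bigl(e^{(\lambda_2+o(1))n}\bigr)\abs{\vec x}.$$
By Lemma \ref{lemma:v_pos}, $\vec v^1$ is strictly positive, so $\vec v^1\vec x>0$ for every nonnegative nonzero $\vec x$. Because $\abs{\vec u_n^1}=1$, we get $\abs{M_n\vec x}\ge\sigma_n^1(\vec v^1\vec x)-O(e^{(\lambda_2+o(1))n})$. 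The first term grows at rate $\lambda_1>\lambda_2$, so $\liminf\frac1n\log\abs{M_n\vec x}\ge\lambda_1$, while the upper bound is as before. The exceptional null set again does not depend on $\vec x$, because $\vec v^1$ and the error term are chosen per $\omega$.

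The main obstacle is the second part: one must ensure that the random vector $\vec v^1$ is strictly positive on a single full-measure set valid for all $\vec x$ simultaneously. Lemma \ref{lemma:v_pos} provides exactly this, and through it the weak sequential primitivity and Condition \ref{cond:subexp_elements} enter. The first part is an elementary comparison argument.
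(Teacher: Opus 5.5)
Your proof is correct. For the first claim you take a genuinely different route from the paper, and for the second you use the paper's key input.

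\textbf{First claim.} The paper works with $M_n^\top M_n=V_n^\top\Sigma_n^2V_n$. After the stopping time $\tau$ this matrix is entrywise positive, which is where weak sequential primitivity enters. Perron--Frobenius then gives $\vec v_n^1>0$, and the paper concludes from
$\abs{M_n\vec x}^2=\sum_k(\sigma_n^k)^2(\vec v_n^k\vec x)^2\ge(\sigma_n^1)^2(\vec v_n^1\vec x)^2$.
Your comparison $\abs{M_n\vec x}\ge(\min_k x^k)\abs{M_n\1}\ge(\min_k x^k)p^{-1/2}\sigma_n^1$ uses only entrywise nonnegativity. It therefore shows the first claim for every nonnegative process satisfying Condition~\ref{cond:oseledec}, without weak sequential primitivity, Perron--Frobenius, or a spectral gap.

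The last point is a real gain. The paper bounds $\vec v_n^1\vec x$ from below by passing to the limit $\vec v^1$. The existence of that limit (Lemma~\ref{lemma:first_order_approx}) presupposes $\lambda_1>\lambda_2$, which is not a hypothesis of the first claim. The paper's route can be repaired using $\vec v_n^1\vec x\ge\min_k x^k$, valid because $\vec v_n^1\ge 0$ is a unit vector. What the paper's route offers in return is the positivity of $\vec v_n^1$ itself, the observation that Lemmas~\ref{lemma:v_pos} and~\ref{lemma:ui1n_eimn} also rely on.

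\textbf{Second claim.} You use the same key input as the paper: strict positivity of $\vec v^1$ from Lemma~\ref{lemma:v_pos}. You go through the first-order expansion and its error term. The quadratic-form identity above gives $\abs{M_n\vec x}\ge\sigma_n^1\abs{\vec v_n^1\vec x}$ with $\vec v_n^1\vec x\to\vec v^1\vec x>0$ directly, with no error term. Both versions are valid and both give a null set that does not depend on $\vec x$.
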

\begin{proof}
The proof has the same structure as of \cite{gerencser2021tight} but has multiple differences in the details.

Take $M_n^\top M_n=V_n^\top\Sigma_n^2 V_n,$ where the matrices $V_n$ and $\Sigma_n$ are from the singular value decomposition of the matrix $M_n=U_n\Sigma_n V_n.$ After a finite stopping time, this is a positive semidefinite matrix with strictly positive elements. Indeed, when each row of $M_n$ is either zero or strictly positive, the entries of product $M_n^\top M_n$ are strictly positive.

The eigenvalues of $M_n^\top M_n$ are the diagonal elements $(\sigma_n^i)^2$ of $\Sigma_n^2$ with corresponding eigenvectors $(\vec v_n^i)^\top.$ By the Perron-Frobenius theorem the eigenvalue of maximal modulus is unique and positive as is the corresponding eigenvector. So $(\sigma_n^1)^2$ is a single eigenvalue, and $\vec v_n^1>0$ elementwise.

Writing $\vec x$ in the basis defined by the rows of $V_n$ we have $\vec x^\top=\sum_i\alpha_n^i\vec v_n^i$ with $\alpha_n^i=\vec v_n^i\vec x.$ Then, $\vec x^\top M_n^\top M_n\vec x=\sum_i(\sigma_n^i)^2(\alpha_n^i)^2.$ Considering $$\alpha_n^1=\vec v_n^1\cdot \vec x=(\vec v_n^1-\vec v^1)\cdot \vec x+\vec v^1\cdot \vec x\to \vec v^1\cdot \vec x>0,$$ we have that $\alpha_n^1>\alpha_1>0$ holds for some positive $\alpha_1,$ possibly with finitely many exceptions either when $\vec x$ or $\vec v^1$ is strictly positive. Note that $v^1$ cannot be zero, because it is the (random) limit of vectors of length $1.$ Thus, $\vec x^\top M_n^\top M_n\vec x>(\sigma_n^1)^2\alpha_1^2,$ implying $\liminf_n\frac 1n\log\abs{M_nx}\geq\lambda_1,$ which concludes the proof.

\end{proof}

The following lemma from \cite{gerencser2021tight} is also needed to conclude the proof of Theorem \ref{thm:general_total_var}.

\begin{lemma}[{Generalization of \cite[Lemma 10]{gerencser2021tight}}]\label{lemma:exterior}
Suppose that Condition \ref{cond:oseledec} holds.
Consider the exterior product space $\bbR^p\wedge\bbR^p$ and the matrices $A_n\wedge A_n$ acting on it. Then, for all pairs $(\vec x,\vec w)\in\bbR^p\times\bbR^p,$ except for a set of Lebesgue measure zero, the a.s. limit $$\lim_{n\to\infty}\frac1n\log\abs{((A_nA_{n-1}\dots A_1)\wedge(A_nA_{n-1}\dots A_1))(\vec x\wedge \vec w)}$$ exists and is equal to $\lambda_1+\lambda_2.$
\end{lemma}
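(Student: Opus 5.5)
We apply Oseledec's theorem to the exterior-power process $(A_n\wedge A_n)_{n=1}^\infty$ acting on $\bbR^p\wedge\bbR^p\cong\bbR^{\binom p2}$. First we check that this process satisfies Condition \ref{cond:oseledec}. Strict stationarity and ergodicity are inherited, because $A_n\wedge A_n$ is a fixed measurable function of $A_n$. The integrability condition follows from $\norm{A_1\wedge A_1}\le\norm{A_1}^2$ (operator norms with respect to the induced inner product on the exterior power), so that $\bbE\log\norm{A_1\wedge A_1}^+\le 2\,\bbE\log\norm{A_1}^+<\infty$. Multiplicativity gives $(A_n\wedge A_n)\cdots(A_1\wedge A_1)=M_n\wedge M_n$. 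Nonnegativity and weak primitivity play no role in this lemma, which is why only Condition \ref{cond:oseledec} is assumed.

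Next we identify the top Lyapunov exponent of the exterior process. Using the singular value decomposition $M_n=U_n\Sigma_nV_n$, we get $M_n\wedge M_n=(U_n\wedge U_n)(\Sigma_n\wedge\Sigma_n)(V_n\wedge V_n)$. Here $U_n\wedge U_n$ and $V_n\wedge V_n$ are orthogonal, and $\Sigma_n\wedge\Sigma_n$ is diagonal with entries $\sigma_n^k\sigma_n^l$ for $k<l$. Hence the top singular value of $M_n\wedge M_n$ is $\sigma_n^1\sigma_n^2$, and since $\frac1n\log\sigma_n^k\to\lambda_k$ a.s., the top exponent of the exterior process is $\lambda_1+\lambda_2$.

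By Oseledec's theorem applied to the exterior process, for a.e.\ $\omega$ the limit $\lim_n\frac1n\log\abs{(M_n\wedge M_n)\vec z}$ exists for every $\vec z$. It equals $\lambda_1+\lambda_2$ exactly when $\vec z\notin L^{\wedge}_{\mu_2}(\omega)$, where $L^{\wedge}_{\mu_2}(\omega)$ is a proper linear subspace of $\bbR^p\wedge\bbR^p$. If the top exponent of the exterior process has full multiplicity, this subspace is $\{0\}$ and the statement is immediate. So it remains to show that the set
$$S(\omega)=\{(\vec x,\vec w):\vec x\wedge\vec w\in L^{\wedge}_{\mu_2}(\omega)\}$$
has Lebesgue measure zero for a.e.\ $\omega$, and then to pass to a statement holding a.s.\ for Lebesgue-a.e.\ fixed pair.

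The main obstacle is that $S(\omega)$ is not a linear subspace: it is the preimage of a proper subspace under the quadratic map $(\vec x,\vec w)\mapsto\vec x\wedge\vec w$. To handle this, choose a nonzero linear functional $\phi$ on $\bbR^p\wedge\bbR^p$ that vanishes on $L^{\wedge}_{\mu_2}(\omega)$. Then $S(\omega)\subseteq\{\phi(\vec x\wedge\vec w)=0\}$, and $\phi(\vec x\wedge\vec w)$ is a bilinear form in $(\vec x,\vec w)$. This form is not identically zero, because decomposable vectors $\vec e_k\wedge\vec e_l$ span $\bbR^p\wedge\bbR^p$. The zero set of a nonzero polynomial has Lebesgue measure zero, so $S(\omega)$ is null for every $\omega\in\Omega'$.

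Finally, the set $\{(\omega,\vec x,\vec w):\text{the limit fails or differs from }\lambda_1+\lambda_2\}$ is measurable, since it is defined through measurable limits of continuous functions. By Fubini's theorem, because almost every $\omega$-section is Lebesgue-null, for Lebesgue-a.e.\ pair $(\vec x,\vec w)$ the $\omega$-section has probability zero. This is exactly the claimed statement.
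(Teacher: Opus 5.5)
Your proof is correct and follows the same route as the argument the paper takes over from \cite{gerencser2021tight} and re-uses in the proof of Lemma \ref{lemma:exterior_exceptional}: Oseledec's theorem for the exterior-square process $M_n\wedge M_n$ (top exponent $\lambda_1+\lambda_2$), a Lebesgue-null exceptional set for each fixed $\omega$, and then Fubini. Your description of that exceptional set as lying in the zero set of the nonzero bilinear form $\phi(\vec x\wedge\vec w)$ is in fact more accurate than the paper's later claim that it is a linear subspace of $\bbR^p\times\bbR^p$.
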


\begin{proof}[Proof of Theorem \ref{thm:general_total_var}]
The proof is the same as for \cite[Theorem 8]{gerencser2021tight} but we have to use Lemma \ref{lemma:first_dominates} instead of \cite[Lemma 9]{gerencser2021tight}.
\end{proof}

\subsection{Proof of Theorems \ref{thm:spectral_bound} and \ref{thm:col_stoch_spectral_bound}}

\begin{lemma}
 \label{lemma:ui1n_eimn} Let $(A_n)_{n=1}^\infty$ be a weakly sequentially primitive matrix series and $\tau$ as in the definition. Then, for any $n\geq\tau$ and $i\in1,\dots,p$, it holds that $\vec e_i^\top M_n=0$ and $\vec e_i^\top \vec u_n^1=0$ are equivalent.
\end{lemma}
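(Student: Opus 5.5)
Write the singular value decomposition $M_n=U_n\Sigma_nV_n$ and use the identity $\vec u_n^1\sigma_n^1=M_n(\vec v_n^1)^\top$. This holds because $V_n$ is orthonormal with rows $\vec v_n^i$, so $M_n(\vec v_n^1)^\top=U_n\Sigma_n\vec e_1=\sigma_n^1\vec u_n^1$. Multiplying by $\vec e_i^\top$ gives
\[
\sigma_n^1\,\vec e_i^\top\vec u_n^1=\vec e_i^\top M_n(\vec v_n^1)^\top .
\]
The equivalence then reduces to two facts: $\sigma_n^1>0$, and $\vec v_n^1$ is strictly positive for $n\ge\tau$.

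First we check that the defining row property persists beyond $\tau$. If every row of $M_\tau$ is either strictly positive or zero, the same holds for $M_n$ with $n\ge\tau$. Indeed, the $i$-th row of $M_n$ equals $\vec e_i^\top A_n\cdots A_{\tau+1}M_\tau$, a nonnegative combination of rows of $M_\tau$. It is therefore strictly positive as soon as some coefficient multiplying a positive row is positive, and zero otherwise.

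We also need $M_n\neq0$. Each $A_k$ is column-allowable, so it maps nonzero nonnegative vectors to nonzero nonnegative vectors. Applying this to the columns of $M_n$ inductively, starting from $A_1\neq0$, shows that every column of $M_n$ is nonzero. In particular $\sigma_n^1>0$, and at least one row of $M_n$ is strictly positive.

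For $n\ge\tau$, every entry of $M_n^\top M_n$ is strictly positive, as observed in the proof of Lemma \ref{lemma:first_dominates}: each entry is a sum over rows of products of entries from one row, and at least one row is strictly positive. By the Perron--Frobenius theorem the top eigenvalue $(\sigma_n^1)^2$ is simple and its eigenvector can be chosen strictly positive. We fix the sign convention of the SVD so that $\vec v_n^1>0$ elementwise, as the paper does.

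The two implications now follow. If $\vec e_i^\top M_n=\vec 0$, the identity gives $\vec e_i^\top\vec u_n^1=0$ immediately. Conversely, if $\vec e_i^\top M_n\neq\vec 0$, then by the first step this row is strictly positive. Its inner product with the strictly positive vector $\vec v_n^1$ is therefore positive, so $\vec e_i^\top\vec u_n^1>0$ because $\sigma_n^1>0$.

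The only delicate point is checking that the row dichotomy propagates from time $\tau$ to all later $n$ and that $\sigma_n^1\neq0$; I expect both to be straightforward from nonnegativity and column-allowability. No spectral gap assumption is needed, since the statement concerns the finite-$n$ singular vectors rather than the limit $\vec v^1$.
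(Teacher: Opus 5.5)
Your proof is correct and is essentially the paper's argument. The paper expands $\vec e_i^\top M_n=\sum_k \vec e_i^\top\vec u_n^k\sigma_n^k\vec v_n^k$ and observes that the orthogonal complement of the nonnegative $\vec v_n^1$ contains no strictly positive vector; that coefficient of $\vec v_n^1$ is exactly your identity $\sigma_n^1\vec e_i^\top\vec u_n^1=\vec e_i^\top M_n(\vec v_n^1)^\top$, combined with the row dichotomy and $\sigma_n^1>0$. You also spell out two points the paper leaves implicit: the row dichotomy persists for all $n\ge\tau$, and column-allowability forces $M_n\neq 0$.
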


\begin{proof}
Using the singular value decomposition, we get the following equation on row vectors: $$\vec e_i^\top M_n=\sum_{k=1}^p \vec e_i^\top \vec u_n^k\sigma_n^k \vec v_n^k.$$ The matrix $V_n$ is orthonormal and $\vec v^1_n\geq 0,$ so the span of $\{v_n^k,k\geq 2\}$ does not have a strictly positive element. Thus the LHS being strictly positive or zero is equivalent to the coefficient of $v_n^1$ being strictly positive or zero, which is $\vec e_i^\top \vec u_n^1\sigma_n^1.$ The matrix $M_n$ is nonzero so $\sigma_n^1>0$, thus the LHS being zero is equivalent to $\vec e_i^\top \vec u_n^1$ being zero.
\end{proof}

\begin{proof}[Proof of Theorem \ref{thm:spectral_bound}]
 The spectral gap is assumed to be positive so we can write $$\frac{\vec e_i^\top M_n\vec x}{\vec e_i^\top M_n\vec w}=\frac{\vec e_i^\top \vec u_n^1\vec v^1\vec x\sigma_n^1+O(e^{(\lambda_2+o(1))n})}{e_i^\top\vec u_n^1\vec v^1\vec w\sigma_n^1+O(e^{(\lambda_2+o(1))n})}=\frac{\vec e_i^\top\vec u_n^1\cdot\vec v^1\vec x+O(e^{(-\lambda_1+\lambda_2+o(1))n})}{\vec e_i^\top\vec u_n^1\cdot\vec v^1\vec w+O(e^{(-\lambda_1+\lambda_2+o(1))n})}.$$ By Lemma \ref{lemma:v_pos}
 we have $\vec v^1>0$ thus $\vec w\geq 0$ and $\vec w\neq\vec 0$ implies $\vec v^1\vec w>0.$ Applying \ref{lemma:ui1n_eimn} we have that along the sequence where we take the $\limsup$, $\vec e_i^\top\vec u_n^1\neq 0$ holds. Dividing both the numerator and the denominator by $\vec v^1\vec w$ and $\vec e_i^\top\vec u_n^1$ completes the proof with noting that $1/\vec e_i^\top\vec u_n^1=1/\vec u_n^{i1}$ is subexponential by Lemma \ref{lemma:u_subexp}.
\end{proof}

\begin{proof}[Proof of Theorem \ref{thm:col_stoch_spectral_bound}]
 Same as in \cite{gerencser2021tight}.
\end{proof}

\subsection{Exceptional set}

The coordinates of values and weights corresponding to real nodes, defined in Definition \ref{def:nodes}, are not spanning the whole $2p$-dimensional Eucledian space, so we need a stronger statement on the exceptional set than being  measure zero with respect to the $2p$-dimensional Lebesgue measure.

\begin{lemma}\label{lemma:exterior_exceptional}
Suppose that the conditions of Lemma \ref{lemma:exterior} hold. If additionally the conditions of Lemma \ref{lemma:v_pos} hold, then we can say the following about the exceptional set in Lemma \ref{lemma:exterior}.

Let $U^q\cong\bbR^q\leq\bbR^p$ be a $q$-dimensional linear subspace spanned by $q$ coordinate vectors. If there exists an almost surely finite stopping time $T$ such that the image of the matrix $M_T=A_T\cdots A_1$ is $U^q$ and its rank is maximal and shifting time by $T$ steps is a measure preserving transformation of the underlying probability space, then the zero-measure exceptional set of $\bbR^p\times\bbR^p$ cannot contain $U^q\times U^q$. In fact, inside $U^q\times U^q,$ the exceptional set is of $2q$-dimensional Lebesgue-measure zero.
\end{lemma}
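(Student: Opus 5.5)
We will first characterize the exceptional set of Lemma \ref{lemma:exterior}. By Oseledec's theorem applied to $A_n\wedge A_n$, the Lyapunov exponent of $\vec x\wedge\vec w$ is at most $\lambda_1+\lambda_2$. It equals $\lambda_1+\lambda_2$ unless $\vec x\wedge\vec w$ lies in the random proper subspace $L^{\wedge}$ of $\bbR^p\wedge\bbR^p$ consisting of vectors with strictly smaller exponent. Under a positive gap $\lambda_1>\lambda_2$, a pair $(\vec x,\vec w)$ is non-exceptional when $\vec v^1\vec x\neq0$ or $\vec v^1\vec w\neq 0$ and $\vec x,\vec w$ are independent modulo $L_{\mu_2}$-type directions. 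In the general case we use the following fact: for fixed $\omega$, the set $E(\omega)=\{(\vec x,\vec w):\vec x\wedge\vec w\in L^\wedge(\omega)\}$ is a real algebraic variety, given by finitely many polynomial (bilinear) equations in $(\vec x,\vec w)$. Hence, restricted to a subspace $U^q\times U^q$, the set $E(\omega)\cap(U^q\times U^q)$ either is all of $U^q\times U^q$ or has $2q$-dimensional Lebesgue measure zero. So it suffices to exhibit, almost surely, one pair in $U^q\times U^q$ whose exterior product has exponent exactly $\lambda_1+\lambda_2$. Fubini then gives the almost sure statement for Lebesgue-a.e.\ pair.

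To find such a pair, we use the stopping time $T$. Since $M_T$ maps $\bbR^p$ onto $U^q$, every $\vec y\in U^q$ is of the form $M_T\vec z$. We write $M_n=M'_{n-T}M_T$ for $n\ge T$, where $M'_m=A_{T+m}\cdots A_{T+1}$ is the shifted product. Because the shift by $T$ is measure preserving, the shifted process satisfies Condition \ref{cond:oseledec} with the same Lyapunov exponents. Lemma \ref{lemma:exterior}, applied to the shifted process, therefore says that for Lebesgue-a.e.\ $(\vec y,\vec y')\in\bbR^p\times\bbR^p$, $(M'_m\wedge M'_m)(\vec y\wedge\vec y')$ has exponent $\lambda_1+\lambda_2$. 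This alone is not enough, because $U^q$ itself is a null set in $\bbR^p$. We must instead pull back: for Lebesgue-a.e.\ $(\vec z,\vec z')\in\bbR^p\times\bbR^p$, applied to the original process, $(M_n\wedge M_n)(\vec z\wedge\vec z')=(M'_{n-T}\wedge M'_{n-T})(M_T\vec z\wedge M_T\vec z')$ has exponent $\lambda_1+\lambda_2$. Thus the pair $(M_T\vec z,M_T\vec z')\in U^q\times U^q$ is non-exceptional for the shifted product. The subtlety is that the exceptional set of interest refers to the original process started at time $0$, not time $T$.

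To bridge the two processes we use the structure of the products. For $\vec y\in U^q$ we have $M_n\vec y$ versus $M'_{n-T}\vec y$. Here we plan to use the assumed conditions of Lemma \ref{lemma:v_pos}: $\vec v^1>0$, and by Lemma \ref{lemma:first_dominates} every nonnegative nonzero vector has top exponent $\lambda_1$. Concretely, we will show that the exponent $\lambda_1+\lambda_2$ is attained by $(M_n\wedge M_n)(\vec y\wedge\vec y')$ for some $\vec y,\vec y'\in U^q$. Suppose instead that all of $U^q\wedge U^q$ lay in $L^\wedge$. Then the restriction of $M_n$ to $U^q$ would have second exponent smaller than $\lambda_2$. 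Since the image of $M_T$ is $U^q$ with maximal rank, the map $M_T$ is surjective onto $U^q$, so $\bbR^p\wedge\bbR^p$ is mapped onto $U^q\wedge U^q$. By stationarity under the shift by $T$, the product $M_{n+T}$ would then have all exterior exponents below $\lambda_1+\lambda_2$. This contradicts Lemma \ref{lemma:exterior} for the shifted-by-$T$ process, which has the same distribution.

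We expect the main obstacle to be precisely this step: transferring the statement about the shifted process back to the original one on $U^q$. It relies on the measure-preserving shift, with care needed because $T$ is random, and on surjectivity of $M_T$ onto $U^q$. Once one non-exceptional pair in $U^q\times U^q$ exists almost surely, the algebraic-variety dichotomy finishes the proof.
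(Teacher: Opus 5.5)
Your proposal is correct and is essentially the paper's argument. You factor $M_{n+T}=M'_nM_T$ and observe that, since $M_T$ maps into $U^q$, the Lebesgue-null exceptional set of the original process is the pull-back of the shifted process's exceptional set, so the latter cannot contain $U^q\times U^q$. You then transfer this to the original process through the measure-preserving shift by $T$ and finish with the ``proper subvariety is null'' dichotomy and Fubini. Your description of the exceptional set as a real algebraic variety cut out by bilinear equations is in fact more accurate than the paper's ``linear subspace'' wording.

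Two points in your third paragraph need adjusting. The appeal to $\vec v^1>0$ and Lemma~\ref{lemma:first_dominates} is unnecessary. The ``by stationarity'' step must be read distributionally, not pathwise: the event that $U^q\wedge U^q$ is degenerate has the same probability for the original and the $T$-shifted process.
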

\begin{proof}
 First, fix $\omega,$ the randomness in the matrix sequence. In $\bbR^p\times\bbR^p,$ the exceptional set is a linear subspace because it is defined with linear maps. The first part of Lemma \ref{lemma:exterior} states that the exceptional set is of Lebesgue-$2p$-measure zero in $\bbR^p\times\bbR^p$. Consider the map $(M_N,M_N)$ acting on $\bbR^p\times\bbR^p,$ its image is $U^q\times U^q$ with the appropriate inclusion. As we assumed that the rank of $M_N$ is $q$, the rank of the matrix $(M_N,M_N)$ is $2q$. The image of the exceptional set must be a proper subspace of $U^q\times U^q$, otherwise the exceptional set would have to be $\bbR^p\times\bbR^p.$ Shifting the time with $T$ - thus changing $\omega$ - we have that the intersection of the exceptional subspace corresponding to the shifted time and $U^q\times U^q$, is of $2q$-dimensional Lebesgue measure zero.

 The argument above works almost surely (in $\omega$) and applying Fubini's theorem similarly as before we get the statement.
\end{proof}

Note, that in Lemma \ref{lemma:exterior_exceptional}, the condition that the time shift by the stopping time $T$ is a measure preserving transformation is not automatic. It holds when the elements of the matrix process $(A_n)_{n=1}^\infty$ are independent, but it is not true for strictly stationary processes in general.

\subsection{Simpler conditions}
The Condition \ref{cond:subexp_elements} is hard to check in general, thus here we discuss some cases when it holds. For the proofs we follow \cite[Appendix D]{gerencser2021tight}.

\begin{nota}
 For a matrix series $(A_n)_{n=1}^\infty,$ we use the notations for the minimal and maximal positive elements of the matrix $A_n$ as follows: $$\alpha_n=\min_{ij}\{A_n^{ij}:A_n^{ij}>0\},\quad \beta_n=\max_{ij}A_n^{ij}.$$
\end{nota}

\begin{cond}[{Generalization of \cite[Condition 11]{gerencser2021tight}}]
 \label{cond:iid}
 Let $(A_n)_{n=1}^\infty$ be a process of i.i.d. random $p\times p$ nonnegative matrices. We assume that $\bbE\log^-\alpha_1>-\infty,$ where $\alpha_1$ is the minimal positive element of $A_1$ defined previously.
\end{cond}

\begin{defi}[{Generalization of \cite[Definition 4]{gerencser2021tight}}]
 For a weakly forward sequentially primitive series of matrices $(A_n)_{n=1}^\infty,$ the index of weakly forward sequential primitivity is defined as $$\psi_n=\min\{\psi\geq1:\text{each row of }A_{n+\psi-1}A_{n+\psi-2}\cdots A_n\text{ is either strictly positive or zero.}\}$$
 When the process $(A_n)_{n=1}^\infty$ is two-sided, the index of weakly backwards sequential primitivity is defined as $$\rho_n=\min\{\rho\geq1:\text{each row of }A_nA_{n+\psi-2}\cdots A_{n-\rho+1}\text{ is either strictly positive or zero.}\}$$
\end{defi}

\begin{cond}[{Generalization of \cite[Condition 13]{gerencser2021tight}}]
 \label{cond:bounded}
 Let $(A_n)_{n=1}^\infty$ be a strictly stationary, ergodic process of random, $p\times p$ nonnegative matrices. We say that $(A_n)_{n=1}^\infty$ is bounded from below and from above if there exist $\alpha,\beta>0$ such that we have a.s. $$0<\alpha\leq\alpha_n\leq\beta_n\leq\beta,$$ where $\alpha_n$ and $\beta_n$ are the minimal and maximal positive elements of the matrix $A_n$ defined before, and for the forward index of sequential primitivity $\psi_n,$ we have $\bbE\psi_n<\infty.$
\end{cond}

This condition is obviously satisfied if the range of $(A_n)_{n=1}^\infty$ is finite.

\begin{cond}[{Generalization of \cite[Condition 15]{gerencser2021tight}}]
 \label{cond:mmixing}
 Assume that for the index of forward sequential primitivity $\psi_n,$ we have $\bbE\psi_n<\infty.$ Furthermore, assume that $a_n=\log\alpha_n$ and $b_n=\log\beta_n$ satisfy a $q$th order $M$-mixing condition with some $q>4$ defined below.

 A two-sided strictly stationary process $(\xi_n)$ satisfies a $q$th order $M$-mixing condition with $q\geq 1,$ if $\bbE\abs{\xi_n}^q<\infty,$ and for any positive integer $N,$ we have, with some constant $C>0$ $$\bbE\abs{\sum_{n=1}N(\xi_n-\bbE\xi_n)}^q\leq CN^{q/2}.$$
\end{cond}

\begin{lemma}[{Generalization of \cite[Lemma 43]{gerencser2021tight}}]
 Let $M, B$ and $X$ be $p\times p$ matrices such that $M=BX$. Assume that each row of $B$ is either strictly positive or constant zero, and $X$ is a nonnegative, column-allowable matrix. Then, each row of $M$ is either strictly positive or constant zero - moreover, the index of the zero rows are the same for the matrices $B$ and $M$ -, and for any fixed pair of row indices $(i,j)$ and any column index $k$, we have $$\min_r\frac{B^{ir}}{B^{jr}}\leq\frac{M^{ik}}{M^{jk}}\leq\max_r\frac{B^{ir}}{B^{jr}}.$$
\end{lemma}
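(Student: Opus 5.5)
The proof is a direct entrywise computation with $M=BX$, i.e. $M^{ik}=\sum_r B^{ir}X^{rk}$. The two claims, the row structure and the ratio bounds, follow from nonnegativity together with the column-allowability of $X$.

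\emph{Row structure.} Fix a row index $i$.

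If row $i$ of $B$ is identically zero, then $M^{ik}=\sum_r B^{ir}X^{rk}=0$ for every $k$, so row $i$ of $M$ is zero.

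If row $i$ of $B$ is strictly positive, fix a column $k$. Column-allowability of $X$ gives an index $r_0$ with $X^{r_0k}>0$. All terms $B^{ir}X^{rk}$ are nonnegative, so
$$M^{ik}\geq B^{ir_0}X^{r_0k}>0.$$
Hence row $i$ of $M$ is strictly positive. Since each row of $B$ is one of these two types, the zero rows of $M$ are exactly the zero rows of $B$.

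\emph{Ratio bounds.} Take rows $i,j$ and a column $k$. The meaningful case is when row $j$ of $B$ is positive, so that $B^{jr}>0$ for all $r$ and $M^{jk}>0$.

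\begin{itemize}
\item If row $i$ of $B$ is zero, every ratio $B^{ir}/B^{jr}$ is $0$, and $M^{ik}=0$, so the inequality holds with equality throughout.
\item If row $i$ of $B$ is also positive, use the mediant inequality
$$\frac{\sum_r a_r}{\sum_r b_r}\in\Bigl[\min_{r:\,b_r>0}\frac{a_r}{b_r},\ \max_{r:\,b_r>0}\frac{a_r}{b_r}\Bigr],$$
valid for $a_r\geq0$, $b_r\geq0$ with $\sum_r b_r>0$. Apply it with $a_r=B^{ir}X^{rk}$ and $b_r=B^{jr}X^{rk}$; then $\sum_r b_r=M^{jk}>0$. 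For every $r$ with $X^{rk}>0$ the factor $X^{rk}$ cancels, giving $a_r/b_r=B^{ir}/B^{jr}$. Terms with $X^{rk}=0$ contribute nothing to either sum. The resulting min and max over the subset $\{r: X^{rk}>0\}$ lie between the min and max over all $r$, which proves the claim.
\end{itemize}

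\emph{Degenerate case.} If row $j$ of $B$ is zero, $M^{jk}=0$. Under the weak-subexponential convention ($x/0=\infty$ for $x>0$, with $0/0$ treated consistently), both sides degenerate together: if $B^{i\cdot}>0$, every $B^{ir}/B^{jr}=\infty$ and $M^{ik}/M^{jk}=\infty$. The inequality is then read with these conventions.

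\emph{Main obstacle.} The argument is short. The only care needed is the bookkeeping of zero rows and the convention for ratios with vanishing denominators. The genuinely used hypothesis is column-allowability of $X$, which guarantees $M^{jk}>0$ when row $j$ of $B$ is positive, so that the mediant bound applies.
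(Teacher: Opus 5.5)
Your proof is correct and follows the paper's route. The paper handles the zero-row cases directly, where equality holds, and defers the positive--positive case to the original Lemma 43. That case is exactly your weighted-average (mediant) argument, with column-allowability of $X$ guaranteeing $M^{jk}>0$. You also spell out the row-structure claim, which the paper leaves implicit.

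One small precision: the mediant inequality as you state it, for arbitrary $a_r,b_r\geq 0$, is false without the extra hypothesis $b_r=0\Rightarrow a_r=0$. That hypothesis does hold in your application, since $B^{jr}>0$ means $b_r=0$ only when $X^{rk}=0$, in which case $a_r=0$ as well.
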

\begin{proof}
 When either $i$ or $j$ corresponds to a zero row, the inequality is trivial and equality holds. When both $i$th and  $j$th rows are strictly positive, then the proof is the same as in \cite{gerencser2021tight}.
\end{proof}

The next proposition shows, that instead of checking Condition \ref{cond:subexp_elements}, it is enough to guarantee one of the earlier ones.

\begin{prop}[{Generalization of \cite[Lemma 44]{gerencser2021tight}}]
 Either Condition \ref{cond:iid}, \ref{cond:bounded} or \ref{cond:mmixing} hold, Condition \ref{cond:subexp_elements} also holds.
\end{prop}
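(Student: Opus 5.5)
The plan is to follow \cite[Appendix D]{gerencser2021tight}. The first step is a block decomposition of the product. Fix a realization and start at time $1$. Define stopping times recursively by $T_0=0$ and $T_{m+1}=T_m+\psi_{T_m+1}$, using the forward index of weak sequential primitivity. Each block
$$B_m=A_{T_{m}}\cdots A_{T_{m-1}+1}$$
then has every row either strictly positive or zero. Every $A_n$ is column-allowable, and products of column-allowable nonnegative matrices are column-allowable. So for $T_m\le n<T_{m+1}$ we may write $M_n=X\,B_m\cdots B_1$ with $X$ a product of fewer than $\psi_{T_m+1}$ factors.

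The key ratio estimate comes from the generalized \cite[Lemma 43]{gerencser2021tight} just proved, applied with $B=B_m$ and the remaining factors to the right, which form a column-allowable $X$. Whenever rows $i,j$ of $M_{T_m}$ are both positive, this gives
$$\frac{M^{ik}_{T_m}}{M^{jk}_{T_m}}\le\max_r\frac{B_m^{ir}}{B_m^{jr}}\le \frac{p^{L_m-1}\prod_{\ell}\beta_\ell}{\prod_\ell\alpha_\ell},$$
where the products run over the $L_m=T_m-T_{m-1}$ factors of $B_m$. The row $i$ of $B_m$ is a positive sum of paths, so each entry lies between $\prod\alpha_\ell$ and $p^{L_m-1}\prod\beta_\ell$. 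If a row is zero, the ratio is $0$ or $\infty$, which weak subexponentiality permits. For intermediate $n$ I would not use the lemma in the same direction, since $X$ sits on the left. Instead I would bound $M_n^{ik}/M_n^{jk}$ by writing $M_n=X M_{T_m}$ and using a crude estimate. Row $i$ of $M_n$ is at most $p^{L}\prod\beta\cdot\max_r M^{rk}_{T_m}$. Row $j$, if nonzero, is at least $\prod\alpha\cdot\min_r M^{rk}_{T_m}$ over the positive rows. The ratio of these two extremes is again controlled by the block estimate above. Zero rows of $M_n$ are allowed since the ratio is then $\infty$.

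It therefore suffices to show that $\frac1n\sum_{\ell\in\text{block}}(\log\beta_\ell-\log\alpha_\ell)+\frac{L}{n}\log p\to 0$ for the current and the straddling block. This reduces to $\psi_{T_m+1}=o(T_m)$ and $\max_{\ell\le n}$-window sums of $\log\beta_\ell-\log\alpha_\ell$ over windows of length $o(n)$ being $o(n)$. The three conditions are used as follows.
\begin{itemize}
\item Under Condition \ref{cond:bounded} the summands are bounded by $\log(\beta/\alpha)$. We then need $\psi_n/n\to0$ a.s., which follows from $\bbE\psi_1<\infty$ and stationarity via Borel--Cantelli: $\sum_n\bbP(\psi_n>\eps n)<\infty$.
\item Under Condition \ref{cond:iid}, the variable $\psi$ has a geometric-type tail. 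By the strong law, $\frac1n\sum_{\ell\le n}(\log^+\beta_\ell-\log\alpha_\ell)$ converges, and differences of partial sums over windows of length $o(n)$ are $o(n)$. Here $\bbE\log^+\beta<\infty$ comes from Condition \ref{cond:oseledec}.
\item Under Condition \ref{cond:mmixing} I would use the $q$th moment bound together with Markov's inequality and Borel--Cantelli along blocks, exactly as in the original, to get that window sums are $o(n)$.
\end{itemize}

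The main obstacle is the intermediate times $T_m<n<T_{m+1}$. Here column-allowability only guarantees no new zero columns, and rows can vanish or reappear, so I must check carefully that every nonzero ratio is still bounded by the block quantities. I would handle this with the two-sided crude bound above, keeping the convention that $\infty$ values are discarded.
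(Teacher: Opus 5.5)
Your proof is correct. It follows the same template as the argument the paper inherits from \cite[Appendix D]{gerencser2021tight}, but it splits the product differently. The paper's route uses the backward index $\rho_n$ defined just before the statement. At every time $n$ one splits off the most recent factors,
$$M_n=(A_n\cdots A_{n-\rho_n+1})(A_{n-\rho_n}\cdots A_1),$$
so the block whose rows are strictly positive or zero always stands on the left, and the remainder is column-allowable. The generalized Lemma 43 then gives $M_n^{ik}/M_n^{jk}\le\prod_{\ell=n-\rho_n+1}^{n}p\beta_\ell/\alpha_\ell$ directly at every $n$, and zero rows only make the inequalities trivial. The hypothesis on $\psi$ transfers to $\rho$ for the following reason. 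The property ``every row strictly positive or zero'' is preserved by left multiplication with any nonnegative matrix and by right multiplication with a column-allowable one. Hence $\{\psi_n\le k\}=\{\rho_{n+k-1}\le k\}$, and stationarity with Borel--Cantelli gives $\rho_n=o(n)$ a.s.

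Your forward stopping-time blocks leave the leftover factors on the left, and that is what creates your ``main obstacle'' at intermediate times. Your crude two-sided estimate does resolve it. It is valid because, by the same preservation property, every row of $M_n$ is either strictly positive or zero for all $n\ge\tau$. Rows can switch between zero and positive, but never vanish partially, so a nonzero row $j$ has $M_n^{jk}>0$ for every $k$, as your lower bound implicitly requires.

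The cost of your route is one extra window, of length $<\psi_{T_m+1}=o(n)$, in the estimate, plus a case distinction between block endpoints and intermediate times. The gain is that you use only the forward index appearing in the hypotheses, with no two-sided extension of the process and no transfer to $\rho_n$. From there both arguments reduce to the same task: showing that sums of $\log(p\beta_\ell/\alpha_\ell)$ over windows of length $o(n)$ ending at $n$ are $o(n)$ under each of the three conditions, which you handle as the original does.
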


\begin{proof}
 The proof in \cite{gerencser2021tight} follows through. One need to notice that zero rows behave differently to positive rows but they make the inequalities trivial.
\end{proof}

\subsection{Bound is sharp}

\begin{proof}[Proof of Theorem \ref{thm:strict_bound}]
 Note that because of the weak sequential primitivity condition the maximum is always taken over a nonempty set. The original proof works well if under each $\max$ and $\min$ we have the additional requirement on the index $i$ not to let $\vec e_i^\top M_n=\vec 0.$
\end{proof}

\begin{remark}
 In Theorem \ref{thm:spectral_bound}, the strict positivity of the initial vectors is needed; thus by the end of the proof of Theorem \ref{thm:strict_bound} the "Lebesgue measure zero" set contains all the nonnegative but not strictly positive initial vectors. But this is enough for a counterexample.
 
 However, we want to use this generalized framework in cases when most coordinates of the initial vectors are zero (at the virtual node). Is this sharpness still true there?
\end{remark}

\section{Spectral gap is nontrivial}\label{sec:nontriv_gap}

For our results to be meaningful, we need to have that the spectral gap is nonzero. We address this question in this part. 
In \cite[Section VIII.]{gerencser2021tight}, the properties of the Birkhoff contraction coefficients are used. One application of this quantity is the proof of the Perron-Frobenius theorem. One such proof can be found in \cite{kohlberg1982contraction}. In that article, the contraction coefficient is analyzed in a more general setting: in terms of partially ordered vector spaces. One can identify a partial ordering of a vector space with a cone by taking the nonnegative vectors. If this cone is convex then the theory for the Birkhoff contraction coefficient holds. In the article \cite{gerencser2021tight}, the partial ordering is the one associated to the positive cone. But we can use the nonnegative cone instead, which is just the closure of the positive cone. Moreover, the interior of the nonnegative cone is the positive cone hence the positivity of transformations is the same: a \emph{positive matrix with respect to the nonnegative cone} can only contain positive elements.

We want to have an analogue of \cite[Theorem 36]{gerencser2021tight} with the condition of weak sequential primitivity. First, we discuss how the parts on the Birkhoff contraction coefficients can be addressed, after that we check the effect of the weak allowability. From now on, we follow the convention $\infty/\infty=1$.

\begin{defi}
 Let $\vec x,\vec y\in\bbR_+^p$ be nonnegative, nonzero vectors. Then their \emph{Hilbert distance} is defined as $$h(\vec x,\vec y)=\log\frac{\inf\{\lambda\geq 0\mid \lambda \vec y-\vec x\geq 0\}}{\sup\{\lambda\geq 0\mid \vec x-\lambda \vec y\geq 0\}}.$$
\end{defi}

The Hilbert distance is almost a metric. The difference is that it can take the value $\infty$ and it is zero whenever $\vec y=c\vec x$ for some positive real number $c>0$. It is easy to see that the Hilbert distance of the vectors $\vec x$ and $\vec y$ is $\infty$ if and only if for some index $k$ exactly one of $x_k$ and $y_k$ is zero -- we call this as \emph{$\vec x$ and $\vec y$ are not on the same face of $\bbR_+^n.$} When $\vec x$ and $\vec y$ are on the boundary of $\bbR_+^n$ but on the same face then their Hilbert distance coincides with the appropriate lower dimensional Hilbert-distance. With this in mind, we have the following definition.

\begin{defi}
The \emph{Birkhoff contraction coefficient} of a nonnegative matrix $A$ is defined as $$\tau(A)=\sup\left\{\frac{h(A\vec x,A\vec y)}{h(\vec x,\vec y)} \middle| \vec x,\vec y\in\bbR_+^p, 0<h(\vec x,\vec y) \right\}.$$
\end{defi}

The Hilbert distance defined above can be calculated directly from the coordinates of the vectors via the following lemma.

\begin{lemma}
 The Hilbert distance of two nonnegative, nonzero vectors $\vec x,\vec y\in\bbR_+^p$ which are on the same face can be calculated as $$h(\vec x,\vec y)=\log\max_{k,l\text{ nontriv}}\left(\frac{x_k}{y_k}/\frac{x_l}{y_l} \right),$$ where "$k,l\text{ nontriv}$" means that $\vec x$ and $\vec y$ are not both zero on coordinates $k$ or $l$. If the two vectors are not on the same face, their Hilbert distance is $\infty.$
\end{lemma}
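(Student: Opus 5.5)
The plan is to compute the two extremal quantities in the definition of $h$ directly, coordinate by coordinate, and then treat the different-face case separately. First suppose $\vec x$ and $\vec y$ lie on the same face, so that $x_k=0$ exactly when $y_k=0$. Let $S=\{k : x_k>0\}=\{k: y_k>0\}$; these are precisely the ``nontriv'' indices, and $S\neq\emptyset$ because the vectors are nonzero. For the numerator, the condition $\lambda\vec y-\vec x\geq 0$ is automatic on coordinates outside $S$, where both entries vanish. On $k\in S$ it reads $\lambda\geq x_k/y_k$. Hence
\[
\inf\{\lambda\geq 0\mid \lambda\vec y-\vec x\geq 0\}=\max_{k\in S}\frac{x_k}{y_k}.
\]
The denominator is handled the same way: $\vec x-\lambda\vec y\geq 0$ is equivalent to $\lambda\leq x_l/y_l$ for all $l\in S$. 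Therefore
\[
\sup\{\lambda\geq 0\mid \vec x-\lambda\vec y\geq 0\}=\min_{l\in S}\frac{x_l}{y_l}>0.
\]
Taking the quotient and the logarithm gives $\log\bigl(\max_{k\in S}x_k/y_k\bigr)/\bigl(\min_{l\in S}x_l/y_l\bigr)$. This equals $\log\max_{k,l\in S}\left(\frac{x_k}{y_k}/\frac{x_l}{y_l}\right)$, because the maximum of a ratio over independent indices splits into the maximum of the numerator divided by the minimum of the denominator.

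Now suppose the vectors are not on the same face. Then there is an index $k$ with exactly one of $x_k,y_k$ equal to zero, and there are two subcases. If $y_k=0<x_k$, then $\lambda y_k-x_k<0$ for every $\lambda$, so the infimum is over the empty set and equals $+\infty$. The supremum is at least $0$, since $\lambda=0$ is admissible. It is also finite: choose $l$ with $y_l>0$, which exists since $\vec y\neq\vec 0$, and note that admissibility forces $\lambda\leq x_l/y_l$. So the ratio is $\infty$. If instead $x_k=0<y_k$, then $x_k-\lambda y_k\geq0$ forces $\lambda=0$, so the supremum is $0$. The infimum is either finite and positive or $+\infty$; it is positive because $\vec x\neq\vec 0$ forces $\lambda>0$. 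Either way the ratio is $\infty$, using the convention for division by $0$ and the convention $\infty/\infty=1$ only where it is not needed.

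The only delicate point is this bookkeeping of degenerate cases: empty sets in the infimum, a zero supremum, and conventions such as $c/0=\infty$. I expect to fix these explicitly, with $\inf\emptyset=\infty$ and $\log(c/0)=\infty$ for $c\in(0,\infty]$, before carrying out the computation. The same-face computation itself is routine.
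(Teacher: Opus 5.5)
Your proof is correct. It uses the same basic idea as the paper, restricting to the common support $S$, but it is more self-contained. The paper discards the common zero coordinates and cites the standard formula for the Hilbert distance on the open positive cone of dimension $\lvert S\rvert$. You instead compute the infimum and supremum directly from the definition. That computation both justifies the paper's reduction (zero coordinates impose no constraint) and reproves the cited formula.

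You also verify the different-face claim. The paper's proof does not address it; it is only asserted as ``easy to see'' in the text before the lemma. You also correctly note that $\infty/\infty$ never arises, because the supremum is bounded by $x_l/y_l$ for any $l$ with $y_l>0$, so it is finite whenever $\vec y\neq\vec 0$.

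One small slip is in the subcase $x_k=0<y_k$. Knowing that every admissible $\lambda$ is positive does not by itself make the infimum positive. Argue instead from an index $m$ with $x_m>0$:
\begin{itemize}
\item if $y_m>0$, admissibility forces $\lambda\geq x_m/y_m>0$, so the infimum is at least $x_m/y_m$;
\item if $y_m=0$, there is no admissible $\lambda$, so the infimum is $+\infty$.
\end{itemize}
In both cases the ratio is $c/0$ with $c\in(0,\infty]$, as you intended.
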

\begin{proof}
 Since $\vec x$ and $\vec y$ are on the same face, we can apply the theorem for the appropriate dimensional Hilbert distance on open cones.
\end{proof}

For the better understanding of the Birkhoff contraction coefficient, one can turn to Birkhoff's theorem. Let us introduce the following quantity: $\varphi(A)=\max_{i,j}h(A^i,A^j)$ where $A^i$ and $A^j$ denote the $i$th and $j$th column vectors of the matrix $A$, respectively. Naturally, if $A$ has a row which has both positive and zero values, then $\varphi(A)=\infty$ otherwise it is a nonnegative real number. Using the previous lemma, we can calculate $\varphi(A)$ directly from the elements of $A$: if $A$ has a row which contain both positive and zero values then $\varphi(A)=\infty$, otherwise $$\varphi(A)=\log\max_{\substack{i,j,\\k,l\text{ nontriv}}}\left(\frac{A^{ki}}{A^{kj}}\right)/\left(\frac{A^{li}}{A^{lj}}\right),$$ where $k,l\text{ nontriv}$ means that $A_k$ and $A_l$ are both nonzero (strictly positive) rows of $A$. Birkhoff's theorem states (see \cite{birkhoff1957extensions}, \cite[Theorem 3.12]{seneta2006non} or \cite{cavazos2003alternative}) that if $A$ is a strictly positive matrix, then $$\tau(A)=\tanh\left(\frac{\varphi(A)}4\right)=\frac{e^{\varphi(A)/4}-e^{-\varphi(A)/4}}{e^{\varphi(A)/4}+e^{-\varphi(A)/4}}.$$

 The extension of Birkhoff's theorem to the nonnegative matrices is also proved in \cite{seneta2006non}, but here we give a different argument in the following two lemmas.

\begin{lemma} \label{lemma:nonneg_both_pos_and_zero}
 Let $A$ be a nonnegative matrix which has a row containing both at least one positive value and one zero but the matrix $A$ does not have a zero column. Then the contraction coefficient $\tau(A)=1.$
\end{lemma}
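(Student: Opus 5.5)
The plan is to prove the two inequalities $\tau(A)\le 1$ and $\tau(A)\ge 1$ separately. The lower bound comes from an explicit family of vector pairs. The upper bound is the standard monotonicity of the Hilbert distance under cone-preserving linear maps.

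\emph{Upper bound.} Let $\vec x,\vec y\in\bbR_+^p$ with $0<h(\vec x,\vec y)<\infty$; pairs with $h(\vec x,\vec y)=\infty$ contribute a ratio of at most $1$ under the convention $\infty/\infty=1$. Since $A$ has no zero column, $A\vec x$ and $A\vec y$ are nonzero, so $h(A\vec x,A\vec y)$ is defined.
\begin{itemize}
\item If $\lambda\vec y-\vec x\ge 0$, then applying the nonnegative matrix $A$ gives $\lambda A\vec y-A\vec x\ge 0$.
\item Similarly, $\vec x-\mu\vec y\ge0$ gives $A\vec x-\mu A\vec y\ge 0$.
\end{itemize}
Hence the infimum in the numerator of the definition of $h$ can only decrease, and the supremum in the denominator can only increase. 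This gives $h(A\vec x,A\vec y)\le h(\vec x,\vec y)$. Finiteness is preserved as well: $\vec x$ and $\vec y$ on the same face means they have equal supports, so $A\vec x$ and $A\vec y$ also have equal supports. Therefore $\tau(A)\le 1$.

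\emph{Lower bound.} Fix a row $k$ and columns $a,b$ with $A^{ka}>0$ and $A^{kb}=0$. By column-allowability, some row $r$ has $A^{rb}>0$. For $s>0$ take
\[
\vec y=\1,\qquad \vec x_s=\1+s\vec e_b .
\]
These are strictly positive, and by the coordinate formula for the Hilbert distance $h(\vec x_s,\vec y)=\log(1+s)>0$.

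Now $A\vec y=A\1$ and $A\vec x_s=A\1+sA\vec e_b$, and these have the same support, namely the nonzero rows of $A$. Compare the coordinate ratios on the nontrivial rows:
\begin{itemize}
\item Row $k$ is nonzero because $A^{ka}>0$, and its ratio equals $1$ because $A^{kb}=0$.
\item Row $r$ has ratio $1+sc$ with $c=A^{rb}/(A\1)_r\in(0,1]$.
\end{itemize}
So the coordinate formula gives $h(A\vec x_s,A\vec y)\ge\log(1+sc)$. Therefore
\[
\tau(A)\ge\frac{\log(1+sc)}{\log(1+s)}\xrightarrow[s\to\infty]{}1,
\]
and $\tau(A)\ge1$.

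The only delicate points are in the bookkeeping. One is checking that zero rows of $A$ are correctly excluded as ``trivial'' coordinates, which the face argument above handles. The other is making sure row $k$ is indeed a nontrivial coordinate, which holds since it contains a positive entry. I expect no real obstacle beyond this; the main work is choosing the perturbation direction $\vec e_b$ so that row $k$ stays fixed while row $r$ moves.
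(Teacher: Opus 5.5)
Your proof is correct, and its core mechanism matches the paper's. You perturb along a column $b$ that vanishes in the mixed row $k$, which pins row $k$'s coordinate ratio at $1$, and column-allowability supplies another row whose ratio moves.

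The execution differs. The paper stays on a two-dimensional boundary face, taking $\vec x_n=(1,n+1,0,\dots,0)^\top$ and $\vec y_n=(1,n,0,\dots,0)^\top$, so $h(\vec x_n,\vec y_n)=\log(1+1/n)\to 0$. It then computes $h(A\vec x_n,A\vec y_n)$ exactly by identifying the maximizing row (the one minimizing $A^{k1}/A^{k2}$). You instead use strictly positive vectors $\1+s\vec e_b$ and $\1$ with $h=\log(1+s)\to\infty$. You need only a lower bound from a single row $r$, since the factor $c$ becomes negligible on the logarithmic scale.

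Your route uses only the Hilbert metric on the open cone and avoids both the face bookkeeping and the identification of the maximizer. The paper's family gives the slightly finer fact that $A$ fails to contract even at arbitrarily small Hilbert distances, whereas yours works in the large-distance regime.

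You also prove $\tau(A)\le 1$ explicitly, via the inclusion $\{\lambda:\lambda\vec y-\vec x\ge 0\}\subseteq\{\lambda:\lambda A\vec y-A\vec x\ge 0\}$ and its analogue for the supremum. The paper's proof takes this for granted (``it is enough to show a sequence\dots'') and only later remarks that $\tau(A)\le 1$ holds as before. Your version is therefore more self-contained.
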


\begin{proof}
 It is enough to show a sequence of vector pairs $(\vec x_n,\vec y_n)_{n=1}^\infty$ such that the Hilbert distances $h(\vec x_n,\vec y_n)$ are positive and the ratio $\frac{h(A\vec x_n,A\vec y_n)}{h(\vec x_n,\vec y_n)}$ tends to $1$ as $n$ tends to infinity.

 We can reindex the rows and columns of the matrix $A$ in such a way that the special condition holds for the first row and the first element is strictly positive and the second one is zero: $A^{11}>0$ and $A^{12}=0.$

 Define the vectors $\vec x_n$ and $\vec y_n$ as follows: $\vec x_n^\top=(1, n+1, 0, \dots,  0 )$ and $\vec y_n^\top=(1, n, 0, \dots, 0).$ These vectors are on the same face of $\bbR_+^p,$ so their Hilbert-distance is finite: $$h(\vec x_n,\vec y_n)=\log\left(1+\frac1n\right).$$ To calculate the Hilbert distance $h(A\vec x_n,A\vec y_n),$ we have to take a maximum over pairs of coordinates. Here, there are two different cases: either the first coordinate is chosen or not.

 For a fixed $n$, when we consider the first coordinate with the $k$th, we have $$\gamma^k:=\log\left(\frac{A^{k1}+(n+1)\cdot A^{k2}}{A^{k1}+n\cdot A^{k2}}\right).$$ When both coordinates are different to $1$, let say, $k$ and $l$, we have $$\delta^{kl}=\log\left(\frac{A^{k1}+(n+1)\cdot A^{k2}}{A^{k1}+n\cdot A^{k2}}/\frac{A^{l1}+(n+1)\cdot A^{l2}}{A^{l1}+n\cdot A^{l2}}\right).$$ But $\gamma^k\geq 0$ and $\delta^{kl}=\gamma^k-\gamma^l,$ so the maximum must be $\gamma^k$ for some $k.$ From now on, we denote $k$ the index such that $\gamma^k$ is maximal, i.e. $h(A\vec x_n,A\vec y_n)=\gamma^k.$ We can rewrite $\gamma$ as follows: $$\gamma^k=\log\left(1+\frac1{A^{k1}/A^{k2}+n}\right).$$ Because we assumed that $A$ does not have a zero column we have that $A^{k2}>0,$ so we can do the division. From the previous form of $\gamma^k,$ it is clear that $k$ is independent of $n$ -- more precisely, we can choose the same $k$ for all $n$.

 Based on the previous calculations, we have that $$\frac{h(A\vec x_n,A\vec y_n)}{h(\vec x_n,\vec y_n)}=\frac{\log\left(1+\frac1{A^{k1}/A^{k2}+n}\right)}{\log\left(1+\frac1n\right)}$$ with $k$ chosen as before. This sequence tends to $1$ as $n$ tends to infinity as we wanted.
\end{proof}

\begin{lemma} \label{lemma:noneg_birkh}
 The equality $$\tau(A)=\tanh\left(\frac{\varphi(A)}4\right)$$ holds when $A$ is a nonnegative matrix. In particular, it is strictly less than $1$, if and only if each row of $A$ is either strictly positive or constant zero.
\end{lemma}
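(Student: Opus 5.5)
We split according to the structure of $A$, using the convention $\tanh(\infty/4)=1$. Throughout we assume, as in Lemma \ref{lemma:nonneg_both_pos_and_zero}, that $A$ has no zero column.

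\textbf{Case 1: some row of $A$ contains both a positive entry and a zero.} Then $\varphi(A)=\infty$, so the right-hand side equals $1$. Lemma \ref{lemma:nonneg_both_pos_and_zero} gives $\tau(A)=1$ directly, so the equality holds.

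\textbf{Case 2: every row of $A$ is strictly positive or identically zero.} Let $R$ be the set of indices of the positive rows. This set is nonempty, since $A$ has no zero column. For every $\vec x\in\bbR_+^p$, the image $A\vec x$ vanishes exactly off $R$. Hence all images $A\vec x$ lie on the same face, namely the open positive cone of $\bbR^R$.

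Let $A_R$ be the $|R|\times p$ submatrix of the positive rows. By the coordinate formula for the Hilbert distance,
\[
h(A\vec x,A\vec y)=h_R(A_R\vec x,A_R\vec y),
\]
where $h_R$ is the Hilbert distance on the positive cone of $\bbR^R$. The same formula shows that $\varphi(A)=\varphi(A_R)$, because only the nontrivial rows enter.

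It therefore remains to prove $\tau(A_R)=\tanh(\varphi(A_R)/4)$ for the strictly positive rectangular matrix $A_R$, where $\tau$ is the supremum over all pairs $\vec x,\vec y\in\bbR_+^p$ with $0<h(\vec x,\vec y)$.

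\textbf{Pairs on different faces.} If $\vec x,\vec y$ are not on the same face, then $h(\vec x,\vec y)=\infty$. The images are on the same face, so the numerator is finite, and the ratio is $0$. These pairs do not contribute to the supremum.

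\textbf{Pairs on a common face.} Now take $\vec x,\vec y$ on a common face $F=\{z: z_k=0 \text{ for } k\notin S\}$. On $F$ the map is the positive $|R|\times|S|$ matrix $A_{R,S}$, acting between open cones. Birkhoff's theorem, as cited from \cite{seneta2006non} and \cite{birkhoff1957extensions} for positive (also rectangular) matrices, gives
\[
\sup_{\vec x,\vec y\in F}\frac{h(A\vec x,A\vec y)}{h(\vec x,\vec y)}=\tanh\!\left(\frac{\varphi(A_{R,S})}{4}\right).
\]

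\textbf{Taking the supremum over faces.} The quantity $\varphi(A_{R,S})$ is a maximum of cross-ratios over columns in $S$, so it is monotone in $S$. The full face $S=\{1,\dots,p\}$ therefore attains the maximum, and it equals $\varphi(A_R)=\varphi(A)$. This yields $\tau(A)=\tanh(\varphi(A)/4)$.

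\textbf{The ``in particular'' part.} In Case 2, $\varphi(A)$ is a finite maximum of finite logarithms, so $\tau(A)<1$. In Case 1, $\tau(A)=1$. This gives the stated characterization.

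\textbf{Main obstacle.} The delicate step is applying Birkhoff's theorem to a rectangular positive matrix restricted to a face. If only the square case is trusted, I would first reduce to it. One option is to pad $A_{R,S}$ with duplicated rows or columns; this leaves both $h$ and $\varphi$ unchanged. The other option is to rerun the standard proof: reduce to $2$-dimensional pairs and use the cross-ratio estimate. Either way one must also check that the infinite-distance conventions, such as $\infty/\infty=1$, cause no spurious contributions to the supremum.
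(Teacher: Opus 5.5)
Your proof is correct, but Case 2 takes a different route from the paper.

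\textbf{The paper's route.} The paper does not delete the zero rows; it \emph{fills} them. A zero row is replaced by $\alpha$ times a positive row. Both $h(A_\alpha\vec x,A_\alpha\vec y)$ and $\varphi(A_\alpha)$ are then independent of $\alpha\geq 0$, because a duplicated coordinate does not change the extreme coordinate ratios and a zero coordinate is ignored. An induction on the number of zero rows reduces everything to a strictly positive square matrix, where Birkhoff's theorem is quoted as is. Since both sides are constant in $\alpha$, this is just your ``duplicate rows'' remedy with $\alpha=1$.

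\textbf{What each route buys.} The paper's route only ever uses the square positive case of Birkhoff's theorem. It leaves tacit, however, that this theorem holds for the paper's $\tau$, whose supremum also runs over boundary pairs in $\bbR_+^p$. Your face-by-face analysis makes that point explicit. Pairs on different faces give ratio $0$. A common face with support $S$ gives at most $\tanh(\varphi(A_{R,S})/4)\le\tanh(\varphi(A_R)/4)$, by monotonicity of $\varphi$ in the column set.

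\textbf{Cost of your route.} You need Birkhoff's theorem for rectangular positive matrices. For $A_R$ itself, row duplication suffices and is harmless. For the lower faces, column duplication enlarges the domain of the supremum, so your claim that it ``leaves $h$ unchanged'' needs one more line. Splitting $x_j$ equally between the two copies, and conversely merging the copies, shows that $\tau$ is unchanged.

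\textbf{A cheaper fix for the faces.} Suppose $\vec x,\vec y$ have support $S$ and coordinate ratios in $[m,M]$. Set $y_k=\eps$ and $x_k=m\eps$ for $k\notin S$. This keeps $h(\vec x,\vec y)$ fixed. Since $A_R$ maps into the open positive cone of $\bbR^R$, where $h$ is continuous, every boundary ratio is a limit of interior ratios. So Birkhoff's theorem is needed only for $A_R$.

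Your standing assumption that $A$ has no zero column is also implicit in the paper. It is needed for Lemma \ref{lemma:nonneg_both_pos_and_zero} and for $\varphi$ and $\tau$ to be defined at all.
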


\begin{proof}
 Directly from the definition of $\tau$ and $\varphi$, it is clear that if any row of $A$ has both a zero and a positive element then $\varphi(A)=\infty$ and $\tau(A)=1$ by Lemma \ref{lemma:nonneg_both_pos_and_zero}. Thus, the equality holds.

 The other part of the proof is by a perturbation argument. First, notice that if $\vec x_\alpha=(\alpha x_2, x_2, \dots, x_p)^\top$ and $\vec y_\alpha=(\alpha y_2, y_2, \dots, y_p)^\top$ then for all $\alpha\geq 0$ the value $h(\vec x_\alpha,\vec y_\alpha)$ is the same because in the definition of the function $h$ the $\inf$ and the $\sup$ is taken over the same set no matter the value of $\alpha$. In particular, the ratio for the first and the second coordinates are the same, so when $\alpha=0$ it is still considered.

 Let us suppose that $A$ has $l$ zero rows and we have proven the equality up to $l-1$ zero rows. Moreover, the first row of $A$ is zero and the second is positive. Let $A_\alpha=(\alpha A_2^\top,A_2^\top,\dots,A_p^\top)^\top$ where $A_i$ is the $i$th row of $A$. In words: take $A$ and make the first row to be $\alpha$ times the second one. Now, for any column vertices $i$ and $j$ the value $h(A_\alpha^i,A_\alpha^j)$ is constant in $\alpha\geq 0$ thus $\varphi(A_\alpha)$ is also independent of $\alpha\geq 0$. On the other hand, for any $\vec x$ and $\vec y$ nonnegative vectors, the structure of $A_\alpha \vec x$ and $A_\alpha \vec y$ are the same as before, so $h(A_\alpha \vec x,A_\alpha \vec y)$ is also independent of $\alpha\geq 0$. Thus $\tau(A_\alpha)$ is independent of $\alpha\geq0$. Combining the fact that $\tau(A_\alpha)=\tanh(\frac{\varphi(A_\alpha)}4)$ for $\alpha>0$ and both the LHS and RHS is constant for $\alpha\geq 0$ we get that equality holds for $\alpha=0$ which means that the relation is also true for $A=A_0$.

Note that we used the continuity of $h$ only in some directions of $\bbR^p\times\bbR^p$. It is not continuous in the multivariable sense.
\end{proof}

The previous lemma generalizes the introductory part of \cite[Section VIII.]{gerencser2021tight} to our weaker settings. Submultiplicativity and $\tau(A)\leq 1$ hold as before. Now let us discuss what the consequences are.

\begin{lemma}[{Generalization of \cite[Lemma 33]{gerencser2021tight}}]
 \label{lemma:upperbound_tv}
 Let $\vec\xi$ and $\vec\eta$ be two probability vectors in $\bbR^p$ such that at each coordinate $i$, the values $\xi_i$ and $\eta_i$ are both zero or both positive. Then for their total variation distance, we have $$\norm{\vec\xi-\vec\eta}_{TV}\leq\frac12(e^{h(\vec\xi,\vec\eta)}-1).$$
\end{lemma}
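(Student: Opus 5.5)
The plan is to reduce to the strictly positive case on the common support and use elementary ratio bounds. Let $S=\{i:\xi_i>0\}=\{i:\eta_i>0\}$. Coordinates outside $S$ contribute nothing to $\norm{\vec\xi-\vec\eta}_{TV}=\frac12\sum_i\abs{\xi_i-\eta_i}$. By the lemma computing the Hilbert distance on a common face, we have $h(\vec\xi,\vec\eta)=\log(R/r)$, where $R=\max_{k\in S}\xi_k/\eta_k$ and $r=\min_{k\in S}\xi_k/\eta_k$. So everything reduces to the restrictions of $\vec\xi$ and $\vec\eta$ to $S$, which are strictly positive probability vectors.

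Next I will use the normalisation. Since $\sum_{k\in S}\xi_k=\sum_{k\in S}\eta_k=1$, the ratio $\xi_k/\eta_k$ cannot be above $1$ for all $k$, nor below $1$ for all $k$. Hence $r\leq 1\leq R$. For every $k\in S$ this gives
\[
\xi_k-\eta_k=\eta_k\left(\frac{\xi_k}{\eta_k}-1\right)\leq \eta_k(R-1)\leq \eta_k\left(\frac Rr-1\right).
\]
Summing over the set $P=\{k:\xi_k>\eta_k\}$, and using that the total variation distance equals $\sum_{k\in P}(\xi_k-\eta_k)$ because both vectors sum to one, I get
\[
\norm{\vec\xi-\vec\eta}_{TV}\leq \left(\sum_{k\in P}\eta_k\right)\left(e^{h(\vec\xi,\vec\eta)}-1\right).
\]

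The main obstacle is recovering the factor $\frac12$. The bound above only gives the factor $\eta(P)\leq 1$, and the crude estimate $\eta(P)\leq\frac12$ is not available in general. To fix this I will use both one-sided estimates. Besides the bound above, $\eta_k-\xi_k\leq \eta_k(1-r)$ on the complement $Q=S\setminus P$, and $\norm{\vec\xi-\vec\eta}_{TV}=\sum_{k\in Q}(\eta_k-\xi_k)$. So the total variation distance is at most $\min\{\eta(P)(R-1),\ \eta(Q)(1-r)\}$.

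Set $a=\eta(P)$, so that $\eta(Q)=1-a$. The minimum of $a(R-1)$ and $(1-a)(1-r)$ is at most their weighted combination with weights $\frac{1-r}{(R-1)+(1-r)}$ and $\frac{R-1}{(R-1)+(1-r)}$, which equals $\frac{(R-1)(1-r)}{R-r}$. It then remains to check the scalar inequality $\frac{(R-1)(1-r)}{R-r}\leq\frac12\left(\frac Rr-1\right)=\frac{R-r}{2r}$ for $0<r\leq1\leq R$. This follows from $2r(R-1)(1-r)\leq (R-r)^2$. Indeed, $(R-r)^2=((R-1)+(1-r))^2\geq 4(R-1)(1-r)\geq 2r(R-1)(1-r)$, since $r\leq1$. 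This completes the planned argument.

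The degenerate case $S$ of size one gives $\vec\xi=\vec\eta$, and both sides are zero.
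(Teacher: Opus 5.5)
Your argument is correct, and its first step is exactly the paper's. The paper's proof consists of that step: discard the coordinates outside the common support $S$, where the Hilbert distance reduces to the lower-dimensional one, $h(\vec\xi,\vec\eta)=\log(R/r)$. It then cites \cite[Lemma 33]{gerencser2021tight} for strictly positive probability vectors, whereas you prove that case yourself, so your version is self-contained.

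The factor $\frac12$ is not really an obstacle, though. If you use the two-sided sum instead of the one-sided one, you get
\[
\norm{\vec\xi-\vec\eta}_{TV}=\frac12\sum_{k\in S}\eta_k\abs*{\frac{\xi_k}{\eta_k}-1}\leq\frac12\max\{R-1,\,1-r\}.
\]
Both $R-1\leq R/r-1$ and $1-r\leq R/r-1$ hold (the latter since $r(2-r)\leq 1\leq R$), which finishes the proof in one line. Your min/weighted-average step actually yields the sharper bound $\frac{(R-1)(1-r)}{R-r}\leq\frac{R-r}{4}$, which is more than the lemma needs.

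One small edge case: your weights are undefined when $R=r$. This happens whenever $\vec\xi=\vec\eta$, not only when $\abs{S}=1$, and in that case both sides vanish trivially.
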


\begin{proof}
 Discard those coordinates where the vectors are zero. This does not change neither the LHS nor the RHS. Then apply \cite[Lemma 33]{gerencser2021tight} in the appropriate dimension.
\end{proof}

\begin{theorem}[{Generalization of \cite[Theorem 32]{gerencser2021tight}}]
 Let $(A_n),n\geq 1$ be a strictly stationary, ergodic stochastic process of $p\times p$ matrices satisfying the conditions of Theorem \ref{thm:general_total_var}. Then $$\lambda_1-\lambda_2\geq-\bbE\log\tau(A_1).$$
\end{theorem}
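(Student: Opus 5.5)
The plan is to follow the proof of \cite[Theorem 32]{gerencser2021tight}. Theorem \ref{thm:general_total_var} gives the rate $-(\lambda_1-\lambda_2)$ for collinearization, and we compare it with the contraction produced by Birkhoff coefficients. We may assume $\bbE\log\tau(A_1)<0$; otherwise the right-hand side is $\le 0\le\lambda_1-\lambda_2$ and there is nothing to prove.

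Fix a generic pair $(\vec x,\vec w)$ of strictly positive vectors. By Theorem \ref{thm:general_total_var}, almost surely
\[
\lim_{n\to\infty}\frac1n\log\norm*{\frac{\vec x_n}{\1^\top\vec x_n}-\frac{\vec w_n}{\1^\top\vec w_n}}_{TV}=-(\lambda_1-\lambda_2).
\]
We then bound the left-hand side from above. Since the $A_n$ are column-allowable and $\vec x,\vec w>0$, for every $n$ the vectors $\vec x_n$ and $\vec w_n$ have the same zero pattern, namely the zero rows of $M_n$. So Lemma \ref{lemma:upperbound_tv} applies to the normalized vectors. Because $h$ is scale invariant, this gives
\[
\norm*{\frac{\vec x_n}{\1^\top\vec x_n}-\frac{\vec w_n}{\1^\top\vec w_n}}_{TV}\le\tfrac12\left(e^{h(\vec x_n,\vec w_n)}-1\right).
\]

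Next we bound $h(\vec x_n,\vec w_n)$. The pair $\vec x,\vec w$ is strictly positive, so $h(\vec x,\vec w)<\infty$. By the definition of $\tau$ and submultiplicativity,
\[
h(\vec x_n,\vec w_n)\le \prod_{m=1}^n\tau(A_m)\, h(\vec x,\vec w).
\]
Submultiplicativity and $\tau\le1$ continue to hold in our nonnegative setting, by Lemma \ref{lemma:noneg_birkh} and the remark after it. One caveat: the definition of $\tau$ takes the supremum only over pairs with $0<h$. If $h(\vec x,\vec w)=0$ the vectors are collinear, which is a measure-zero situation that we exclude. If $h(\vec x_m,\vec w_m)=0$ at some step, it stays $0$ afterwards and the inequality holds trivially.

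By Birkhoff's ergodic theorem, which applies under Condition \ref{cond:oseledec},
\[
\frac1n\sum_{m=1}^n\log\tau(A_m)\to\bbE\log\tau(A_1)
\]
almost surely. This step is fine even when the expectation equals $-\infty$, since $\log\tau\le0$ and we may truncate and pass to the limit. Hence $h(\vec x_n,\vec w_n)\to0$ exponentially fast. Using $e^h-1\le 2h$ for small $h$, we get
\[
\limsup_n\frac1n\log\norm{\cdot}_{TV}\le\bbE\log\tau(A_1).
\]
Combined with the first display, this yields $-(\lambda_1-\lambda_2)\le\bbE\log\tau(A_1)$, which is the claim.

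The main obstacle I expect is the bookkeeping with zero patterns and infinite Hilbert distances. We must make sure that $h(\vec x_n,\vec w_n)$ is always finite, so that Lemma \ref{lemma:upperbound_tv} and the contraction inequality are applicable. We must also make sure that the generic pair from Theorem \ref{thm:general_total_var} can be chosen strictly positive, which works since the exceptional set has Lebesgue measure zero and the positive orthant has positive measure. Both points rely on column-allowability preserving a common support for $A\vec x$ and $A\vec w$. Each step should also be handled pathwise on a full-measure set of $\omega$ before taking limits.
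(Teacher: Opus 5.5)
Your proposal is correct and follows the paper's proof essentially step by step: contraction of $h(\vec x_n,\vec w_n)$ by $\prod_{m\le n}\tau(A_m)$, Birkhoff's ergodic theorem, bounding the total variation distance by $h$ via Lemma~\ref{lemma:upperbound_tv}, and comparison with the exact rate $-(\lambda_1-\lambda_2)$ from Theorem~\ref{thm:general_total_var} for a generic strictly positive pair. The only cosmetic difference is in how large $h$ is handled: you first reduce to $\bbE\log\tau(A_1)<0$, so that $h\to0$ and $e^h-1\le 2h$ applies, whereas the paper treats large $h$ directly with the trivial bound $\norm{\cdot}_{TV}\le 1\le h$.
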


\begin{proof}
 The proof is essentially the same as of the original theorem but we have to take care of the positivity constraints. To clarify things we take note on every step.

 Due to Lemma \ref{lemma:noneg_birkh} the inequality $$h(\vec x_n,\vec w_n)=h(A_nA_{n-1}\cdots A_1\vec x,A_nA_{n-1}\cdots A_1\vec w)\leq \prod_{k=1}^n\tau(A_k)\cdot h(\vec x,\vec w)$$ still holds in our weaker setting, i.e. when $A_m$ is weakly allowable for all positive integer $m$ and nonnegative vectors $\vec x,\vec w$. Of course, the inequality can be trivial.

 For the next step, we only consider those $(\vec x,\vec w)$ pairs when they are from the same face thus their Hilbert distance is finite. This means that almost surely $$\limsup_{n\to\infty}\frac1n\log h(x_n,w_n)\leq\limsup_{n\to\infty}\frac1n\sum_{k=1}^n\log\tau(A_k)=\lim_{n\to\infty}\frac1n\sum_{k=1}^n\log\tau(A_k)=\bbE\log\tau(A_1).$$

 For positive vectors $(\vec x,\vec w)$ we can apply Theorem \ref{thm:general_total_var} which implies that $\norm{\bar{\vec x_n}-\bar{\vec w_n}}_{TV}$ is exponentially small (where the bar operator means that the vector is normalized to be a probability vector -- this leaves the Hilbert distance unchanged). Combining this with Lemma \ref{lemma:upperbound_tv} we can write $\norm{\bar{\vec x_n}-\bar{\vec w_n}}_{TV}\leq h(\bar{\vec x_n},\bar{\vec w_n}).$ If $h$ is small then apply the lemma, if $h$ is big then it is naturally bigger then the total variation. Now we have for positive $(\vec x,\vec w)$ the almost sure inequality $$\limsup_{n\to\infty}\frac1n\log\norm{\bar{\vec x_n}-\bar{\vec w_n}}_{TV}\leq\limsup_{n\to\infty}\frac1n\log h(\bar{\vec x_n},\bar{\vec w_n})=\limsup_{n\to\infty}\frac1n\log h(\vec x_n,\vec w_n)\leq\bbE\log\tau(A_1).$$

 But by Theorem \ref{thm:general_total_var} for almost all positive pairs $(\vec x,\vec w)$ the LHS is equal to $-(\lambda_1-\lambda_2)$ almost surely not only with $\limsup$ but with $\lim$. Writing this up for such a pair we get the statement.

 Note that in the proof we used strictly positive starting vectors. But the statement is independent of the actual starting vectors of our process since the Lyapunov exponents depend only on the matrix series.
\end{proof}

From Theorem 34 to Corollary 38 in \cite{gerencser2021tight} every statement and proof is true word-by-word in the general setting if one use the generalized lemmas and theorems addressed before. In the proof of \cite[Theorem 36]{gerencser2021tight} we only have weak sequential primitivity which only guarantees that for a finite $m$ the matrix $M_m$ has only strictly positive or zero rows with positive probability which is enough for us in the general settings. We state here the conclusion of this chain of though.
\begin{theorem}[{Generalization of \cite[Corollary 38]{gerencser2021tight}}]
 Let $(A_n), n\geq 1$ be a strictly stationary, ergodic stochastic process of $p\times p$ matrices satisfying the conditions of Theorem \ref{thm:general_total_var}. Then, we have the following lower bound for the spectral gap almost surely: $$\limsup_{n\to\infty}\frac1n\log\tau(M_n)\geq-(\lambda_1-\lambda_2).$$
\end{theorem}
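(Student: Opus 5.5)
The plan is to compare the Birkhoff contraction of $M_n$ with the exact decay rate of the total variation distance from Theorem \ref{thm:general_total_var}. The chain is: total variation distance is bounded by the Hilbert distance, which is bounded by $\tau(M_n)$ times a constant. Since the total variation distance decays at exactly the rate $-(\lambda_1-\lambda_2)$ for generic initial pairs, $\tau(M_n)$ cannot decay faster than that along the whole sequence.

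First I fix the initial pair. By Theorem \ref{thm:general_total_var}, for Lebesgue-almost every pair $(\vec x,\vec w)\in\bbR^p_+\times\bbR^p_+$ we almost surely have
\[
\lim_{n\to\infty}\frac1n\log\norm*{\bar{\vec x}_n-\bar{\vec w}_n}_{TV}=-(\lambda_1-\lambda_2),
\]
where the bar denotes normalization to a probability vector. The exceptional set may depend on $\omega$, so I apply Fubini to the product of Lebesgue measure and $\bbP$. This produces one deterministic pair $(\vec x,\vec w)$ with strictly positive entries for which the limit holds $\bbP$-a.s. Since both vectors are strictly positive, $h(\vec x,\vec w)<\infty$.

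Next come the Hilbert-distance estimates, holding a.s. for all $n$. Because the Hilbert distance is scale invariant, submultiplicativity of $\tau$ (via Lemma \ref{lemma:noneg_birkh}) gives
\[
h(\bar{\vec x}_n,\bar{\vec w}_n)=h(M_n\vec x,M_n\vec w)\le\tau(M_n)\,h(\vec x,\vec w).
\]
For $n\ge\tau$ (the stopping time of weak sequential primitivity), every row of $M_n$ is either strictly positive or zero. Hence $M_n\vec x$ and $M_n\vec w$ vanish on exactly the same coordinates, namely the zero rows, so they lie on the same face. This is where the weakened hypothesis is used. Lemma \ref{lemma:upperbound_tv} then applies:
\[
\norm{\bar{\vec x}_n-\bar{\vec w}_n}_{TV}\le\tfrac12\bigl(e^{h(\bar{\vec x}_n,\bar{\vec w}_n)}-1\bigr).
\]
When $h\le1$, this is at most $C\,h(\bar{\vec x}_n,\bar{\vec w}_n)$ for a constant $C$.

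Finally I argue by contradiction. Suppose that with positive probability $\limsup_n\frac1n\log\tau(M_n)<-(\lambda_1-\lambda_2)$. On that event $\tau(M_n)\to0$ exponentially fast, so $h(\bar{\vec x}_n,\bar{\vec w}_n)\to0$ and the linear bound above applies for all large $n$. Combining the two estimates gives
\[
\lim_n\frac1n\log\norm{\bar{\vec x}_n-\bar{\vec w}_n}_{TV}\le\limsup_n\frac1n\log\tau(M_n)<-(\lambda_1-\lambda_2).
\]
This contradicts the exact rate for our chosen pair.

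I expect the main obstacle to be the bookkeeping in the first step. One must make sure a single deterministic strictly positive pair can be chosen so that the exact-rate statement holds almost surely, which requires Fubini together with the joint measurability of the exceptional set. A second point to watch is the same-face property, which is guaranteed only for $n\ge\tau$; since $\tau<\infty$ a.s., this is harmless for a $\limsup$.
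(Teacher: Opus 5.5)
Your proof is correct, but it takes a different route from the paper. The paper gives no direct argument for this statement. It transfers the chain from Theorem 34 to Corollary 38 of \cite{gerencser2021tight}. That chain starts from the one-step bound $\lambda_1-\lambda_2\ge-\bbE\log\tau(A_1)$, whose generalized proof uses your mechanism but with $h(\vec x_n,\vec w_n)\le\prod_{k=1}^n\tau(A_k)\,h(\vec x,\vec w)$ and the ergodic theorem. The bound is then extended to products $M_m$ and passed to $n\to\infty$ via submultiplicativity of $\tau$. Weak primitivity enters there only through the positive-probability event that some $M_m$ has only positive or zero rows.

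You instead put $\tau(M_n)$ itself into the comparison with the exact total-variation rate of Theorem \ref{thm:general_total_var}. This avoids block products, expectations and subadditivity altogether. It also gives more than you claim. The total variation distance of probability vectors is at most $1$, so $\norm{\bar{\vec x}_n-\bar{\vec w}_n}_{TV}\le h(\bar{\vec x}_n,\bar{\vec w}_n)$ holds for every $n$, as the paper itself notes in the proof of the generalized Theorem 32. The contradiction step is therefore unnecessary, and you directly obtain $\liminf_n\frac1n\log\tau(M_n)\ge-(\lambda_1-\lambda_2)$ almost surely. Combined with the generalized Theorem 39, this already yields the limit in the generalized Theorem 40 without a subadditive ergodic theorem. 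What the paper's chain buys in exchange is the family of expectation bounds $\lambda_1-\lambda_2\ge-\frac1m\bbE\log\tau(M_m)$, which is what is used to show that the gap is positive.

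Three minor points:
\begin{itemize}
\item No Fubini step is needed. Theorem \ref{thm:general_total_var} is already quantified as ``for Lebesgue-a.e.\ pair, almost surely''.
\item The inequality $h(M_n\vec x,M_n\vec w)\le\tau(M_n)h(\vec x,\vec w)$ is just the definition of $\tau$, not submultiplicativity.
\item For strictly positive $\vec x,\vec w$, the vectors $M_n\vec x$ and $M_n\vec w$ share the same zero pattern (the zero rows of $M_n$) for every $n$. So weak sequential primitivity is used only through Theorem \ref{thm:general_total_var}, not at the same-face step.
\end{itemize}
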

\begin{proof}
 Same as in \cite{gerencser2021tight}.
\end{proof}

In the proof of the reverse inequality we have to be careful about the zero rows of the product matrix.
\begin{theorem}[{Generalization of \cite[Theorem 39]{gerencser2021tight}}]
 Assume that the conditions of Theorem \ref{thm:spectral_bound} are satisfied. Then, we have almost surely $$\limsup_{n\to\infty}\frac1n\log\tau(M_n)\leq-(\lambda_1-\lambda_2).$$
\end{theorem}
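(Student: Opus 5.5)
The plan is to bound $\tau(M_n)$ through Lemma \ref{lemma:noneg_birkh}, since it expresses the contraction coefficient as $\tanh(\varphi(M_n)/4)$, and to estimate $\varphi(M_n)$ with the first order approximation of Lemma \ref{lemma:first_order_approx}. By weak sequential primitivity, and because multiplying on the right by column-allowable matrices preserves the property ``each row is strictly positive or zero'' (by the generalization of \cite[Lemma 43]{gerencser2021tight}), for every $n\geq\tau$ each row of $M_n$ is strictly positive or zero. Hence $\varphi(M_n)<\infty$, and it is given by the cross-ratio formula over columns $i,j$ and \emph{nonzero} rows $k,l$. Since $\tanh(t)\leq t$ for $t\geq0$, it suffices to show
$$\limsup_{n\to\infty}\frac1n\log\varphi(M_n)\leq-(\lambda_1-\lambda_2).$$

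Next I would write each entry through the approximation $M_n=\vec u_n^1\vec v^1\sigma_n^1+O(e^{(\lambda_2+o(1))n})$. For a nonzero row $k$ and $n\geq\tau$, Lemma \ref{lemma:ui1n_eimn} gives $\vec u_n^{k1}\neq0$, so the ratio is finite. Lemma \ref{lemma:u_subexp} then gives $1/\vec u_n^{k1}\leq e^{\eps n}$ eventually, for every $\eps>0$. Lemma \ref{lemma:v_pos} gives $\vec v^{1i}>0$ for all $i$, so $c:=\min_i\vec v^{1i}>0$ is a fixed, $n$-independent random constant. Together with $\sigma_n^1=e^{(\lambda_1+o(1))n}$ this yields
$$\frac{M_n^{ki}}{\vec u_n^{k1}\vec v^{1i}\sigma_n^1}=1+O\bigl(e^{(-(\lambda_1-\lambda_2)+2\eps+o(1))n}\bigr),$$
uniformly over the finitely many indices $k,i$ with row $k$ nonzero.

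Plugging this into the cross-ratio $\frac{M_n^{ki}M_n^{lj}}{M_n^{kj}M_n^{li}}$, the factors $\vec u_n^{k1}$, $\vec u_n^{l1}$, $\vec v^{1i}$, $\vec v^{1j}$ and $\sigma_n^1$ all cancel. What remains is a ratio of products of terms $1+O(e^{(-(\lambda_1-\lambda_2)+2\eps+o(1))n})$. Its logarithm is therefore $O(e^{(-(\lambda_1-\lambda_2)+2\eps+o(1))n})$, since $\lambda_1>\lambda_2$ makes the error tend to zero. Taking the maximum over finitely many index quadruples, then $\frac1n\log$, and letting $\eps\to0$ gives the claimed bound on $\varphi(M_n)$, and hence on $\tau(M_n)$.

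The main obstacle is the division by $\vec u_n^{k1}$. Only weak subexponentiality is available, so $1/\vec u_n^{k1}$ may equal $\infty$, and this happens exactly when row $k$ of $M_n$ vanishes. I would handle this by restricting, for $n\geq\tau$, to the nonzero rows, which are exactly the rows entering $\varphi$. Lemma \ref{lemma:ui1n_eimn} identifies these with the indices where $\vec u_n^{k1}\neq0$, and along such indices Lemma \ref{lemma:u_subexp} gives a genuine subexponential bound. I would also need to check that all the $o(1)$ terms and the almost sure events from Lemmas \ref{lemma:first_order_approx}, \ref{lemma:u_subexp} and \ref{lemma:v_pos} hold simultaneously on one full-measure set. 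This is immediate, since there are finitely many indices and countably many rational $\eps$.
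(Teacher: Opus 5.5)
Your proposal is correct and essentially matches the paper's proof, which reuses the argument of \cite[Theorem 39]{gerencser2021tight}: the entrywise first-order approximation of $M_n$, cancellation in the cross-ratios defining $\varphi(M_n)$, and $\tanh t\le t$, with the entrywise estimate used only for strictly positive rows, the only rows entering $\varphi(M_n)$, exactly as you do. One small slip: the property ``each row strictly positive or zero'' persists for $n\geq\tau$ because of \emph{left} multiplication, $M_n=A_n\cdots A_{\tau+1}M_\tau$ (a nonnegative combination of rows that are each positive or zero is again positive or zero), and not because of the generalized \cite[Lemma 43]{gerencser2021tight}, which needs that property on the left factor and so does not apply here.
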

\begin{proof}
 The proof is almost the same as in \cite{gerencser2021tight}. The difference is that we only have \cite[Equation (72)]{gerencser2021tight} for indices corresponding to strictly positive rows, but these are enough for \cite[Equation (73)]{gerencser2021tight} because of the different conditions on the index set of the $\max.$
\end{proof}

Combining the previous two claims together we get the following:
\begin{theorem}[{Generalization of \cite[Theorem 40]{gerencser2021tight}}]
 Assume that the conditions of Theorem \ref{thm:spectral_bound} are satisfied. Then, we have almost surely $$\lambda_1-\lambda_2=\lim_{n\to\infty}-\frac1n\log\tau(M_n).$$
\end{theorem}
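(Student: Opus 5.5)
The statement follows by combining the two preceding results, the generalizations of \cite[Corollary 38]{gerencser2021tight} and \cite[Theorem 39]{gerencser2021tight}, and then upgrading the resulting $\limsup$ equality to a genuine limit. The conditions of Theorem \ref{thm:spectral_bound} contain those of Theorem \ref{thm:general_total_var}, so both results apply on a common event of probability one. Together they give
\[
\limsup_{n\to\infty}\frac1n\log\tau(M_n)=-(\lambda_1-\lambda_2)\quad\text{a.s.}
\]
It therefore remains to show that the $\liminf$ also equals $-(\lambda_1-\lambda_2)$, i.e. that $\frac1n\log\tau(M_n)$ cannot dip far below its $\limsup$ along a subsequence.

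For the lower bound on the $\liminf$ I would argue as follows.

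\textbf{Step 1 (sign).} Since $\lambda_1>\lambda_2$ is assumed, the $\limsup$ is finite and negative. Hence $\tau(M_n)<1$ for all large $n$, so by Lemma \ref{lemma:noneg_birkh} every row of $M_n$ is strictly positive or zero.

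\textbf{Step 2 (lower bound on $\tau$).} Writing $\tau(M_n)=\tanh(\varphi(M_n)/4)$, small $\tau$ corresponds to small $\varphi$, with $\tau(M_n)\approx\varphi(M_n)/4$. So it suffices to prove $\liminf_n\frac1n\log\varphi(M_n)\ge-(\lambda_1-\lambda_2)$.

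\textbf{Step 3 (via total variation).} By definition $\varphi(M_n)\ge h(M_n\vec x,M_n\vec w)$ whenever $\vec x,\vec w$ are distinct coordinate vectors. More generally, for positive $\vec x,\vec w$ we have $h(M_n\vec x,M_n\vec w)\le\tau(M_n)h(\vec x,\vec w)$. I would apply Lemma \ref{lemma:upperbound_tv} in the form used in the proof of the generalized \cite[Theorem 32]{gerencser2021tight}:
\[
\norm{\bar{\vec x}_n-\bar{\vec w}_n}_{TV}\lesssim h(\vec x_n,\vec w_n)\le\tau(M_n)\,h(\vec x,\vec w).
\]
Now choose a strictly positive pair $(\vec x,\vec w)$ outside the Lebesgue-null exceptional set of Theorem \ref{thm:general_total_var}. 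For such a pair the left-hand side satisfies $\frac1n\log(\cdot)\to-(\lambda_1-\lambda_2)$ as a genuine limit. Taking $\frac1n\log$ and $\liminf$ then gives
\[
\liminf_n\frac1n\log\tau(M_n)\ge-(\lambda_1-\lambda_2).
\]
The a.s. statement survives the choice of pair: I fix one deterministic good pair and use Fubini, since almost every pair is good almost surely. Combined with the $\limsup$ bound, the limit exists and equals $-(\lambda_1-\lambda_2)$.

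\textbf{Main obstacle.} The delicate point is that Theorem \ref{thm:general_total_var} gives the limit only for almost every pair. I must ensure that a fixed deterministic strictly positive pair works almost surely, which the Fubini argument provides. I also need the TV--Hilbert comparison to hold in the regime where $h$ is small, which it does eventually by Step 1. If the direct bound $\norm{\cdot}_{TV}\le\frac12(e^{h}-1)$ is too lossy, I would instead use its linearization $\frac12(e^h-1)\le h$ for $h\le1$, which is harmless on the exponential scale.
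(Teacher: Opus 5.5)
Your proof is correct, but it gets the existence of the limit by a different route from the paper. The paper's proof is a one-line combination of the generalized Corollary 38 and the generalized Theorem 39. Since both are stated with $\limsup$, the existence of the limit is left implicit there. It is inherited from the cited chain (Theorem 34--Corollary 38 of \cite{gerencser2021tight}), where the natural source is the submultiplicativity of $\tau$ noted in the paper, i.e.\ Kingman's subadditive ergodic theorem applied to $\log\tau(M_n)$.

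You instead rerun the argument of the generalized Theorem 32 with $\tau(M_n)$ in place of $\prod_{k\le n}\tau(A_k)$. Fix one strictly positive, non-collinear pair $(\vec x,\vec w)$ outside the exceptional set of Theorem~\ref{thm:general_total_var}. Then for every $n$ you have $\lVert\bar{\vec x}_n-\bar{\vec w}_n\rVert_{TV}\le h(\vec x_n,\vec w_n)\le\tau(M_n)\,h(\vec x,\vec w)$, and the exact rate in Theorem~\ref{thm:general_total_var} yields $\liminf_n\frac1n\log\tau(M_n)\ge-(\lambda_1-\lambda_2)$ pathwise.

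Comparing the two routes:
\begin{itemize}
\item Your lower bound needs only the hypotheses of Theorem~\ref{thm:general_total_var}. It makes the generalized Corollary 38 redundant and avoids Kingman's theorem altogether.
\item The price is that in your argument the limit exists only once the upper bound of the generalized Theorem 39 is available, and that needs the full hypotheses of Theorem~\ref{thm:spectral_bound}.
\item The subadditive route gives existence of the limit, and its identification with $\inf_m\frac1m\bbE\log\tau(M_m)$, under Condition~\ref{cond:oseledec} alone.
\end{itemize}

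Some of your scaffolding is superfluous. Steps 1--2 are not needed, since $\lVert\cdot\rVert_{TV}\le h$ holds for every $h$ (trivially when $h>1$, as total variation is at most $1$). The Fubini appeal is also unnecessary, because Theorem~\ref{thm:general_total_var} is already stated for each fixed non-exceptional pair. The one detail worth stating explicitly is that $\vec x,\vec w>0$ makes $M_n\vec x$ and $M_n\vec w$ vanish exactly on the zero rows of $M_n$. This is the same-face hypothesis that Lemma~\ref{lemma:upperbound_tv} requires.
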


\section{Numerical results}\label{sec:num_res}

Using our results, we have exact convergence rate for the solution of the packet dropping push-sum using the running sums methods, the natural question arise: how does the convergence rate ($r_\text{conv}$) depend on the packet size ($s$) or the rate of packet-drop ($r_\text{drop}$). The nature of the dependence of the two parameters are different. The first one corresponds to applying a (not necessary linear) map on the stochastic process elementwise, while the second one is a change of the measure on the underlying probability space.

The convergence rates are calculated for a network of size $30$ where each node has outdegree $10$ and plotted on Figure \ref{fig:dependence}. The numerical calculation of Lyapunov exponents is done by the QR method, introduced by Benettin, et. al. in \cite{benettin1980lyapunov_p1,benettin1980lyapunov_p2}.

\begin{figure}[h]
\begin{center}
\includegraphics[width=.7\textwidth]{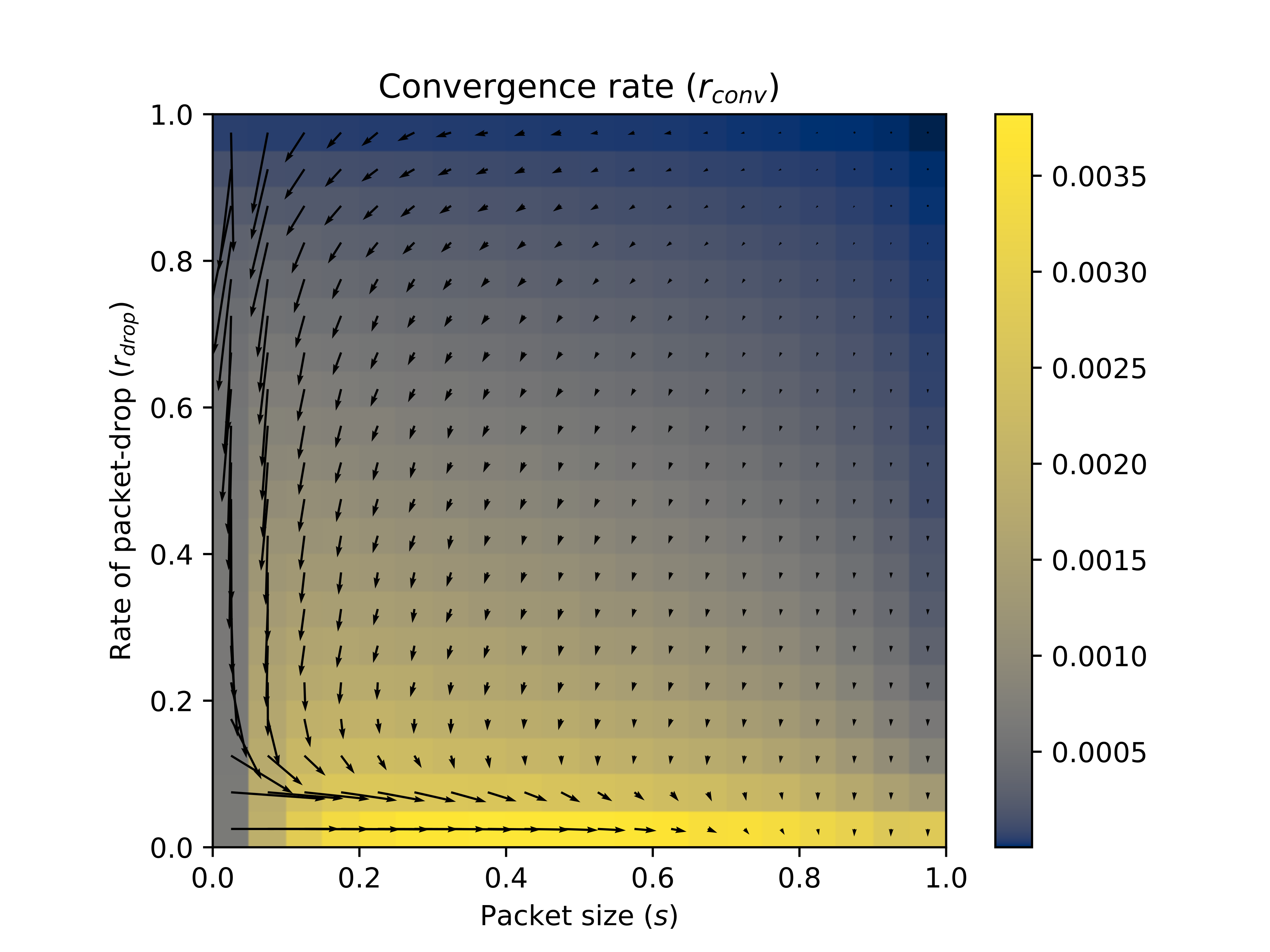}
\caption{Convergence rate for asynchronous push-sum with packet drop on a network of $30$ nodes. The approximation of the gradient is also displayed.}
\label{fig:dependence}
\end{center}
\end{figure}

We conjecture that the following properties are true for both functions $r_\text{conv}(s)$ and $r_\text{conv}(r_\text{drop})$ when we fix the other parameter:

\begin{enumerate}
 \item the function is continuous on the $(0,1)$ interval and
 \item for some $\varepsilon>0,$ the function is monotone on $[0,\varepsilon).$
\end{enumerate}

\section{Conclusion}\label{sec:concl}

In this paper, we provided a significant extension for the results in \cite{gerencser2021tight} in terms of usability by only requiring weak sequential primitivity for the matrix process. These results extend the family of analyzable push-sum like protocols where internal steps can be identified with linear transformations on an augmented network. For example the running sum algorithm can handled using this framework.

\printbibliography

\end{document}